\newcommand{\bl}{\textcolor[rgb]{0.00,0.00,0.00}}
\newcommand{\rd}{\textcolor[rgb]{0,0,0}}
\theoremstyle{plain}
\newtheorem{thm}{Theorem}[section]
\newtheorem{lem}[thm]{Lemma} 
\newtheorem{prop}[thm]{Proposition}
\theoremstyle{definition}
\newtheorem{defn}{Definition}[section]
\newtheorem{Assumption}[thm]{Assumption}
\theoremstyle{remark}
\newtheorem{rem}{Remark}[section]
\newcommand{\upperRomannumeral}[1]{\uppercase\expandafter{\romannumeral#1}}
\newcommand{\be}{\begin{equation}}
\newcommand{\ee}{\end{equation}}
\newcommand{\bq}{\begin{eqnarray}}
\newcommand{\eq}{\end{eqnarray}}
\newcommand{\half}{\frac{1}{2}}
\newcommand{\nn}{\nonumber}
\newcommand{\gm}{\gamma}
\newcommand{\lb}{\lbrace}
\newcommand{\rb}{\rbrace}
\newcommand{\sk}{\smallskip}
\newcommand{\la}{\langle}
\newcommand{\ra}{\rangle}
\newcommand{\e}{\varepsilon}
\newcommand{~}{\,\,\,\,=\,\,\,\,}
\newcommand{\lee}{\,\,\,\,\, \le \,\,\,\,\,}
\newcommand{\lm}{\lambda}
\newcommand{\Lm}{\Lambda }
\newcommand{\al}{\alpha}
\newcommand{\mc}{\mathcal}
\newcommand{\Ex}{\mathbb{E}}
\begin{document}
	\author{
		Alessandro Bondi\thanks{Department of AI, Data and Decision Sciences, Luiss University, Rome. {\tt abondi@luiss.it
		}}
		\and
		Martin Forde\thanks{Department of Mathematics, King's College London. {\tt https://martinforde.github.io}}
	}
	
	\title{\textbf{
			\rd{L\'{e}vy processes as weak limits  of rough Heston models}}
	}
	\maketitle
		
		\begin{abstract}
			We show weak convergence of the time-$t$ marginals for the integrated variance in a re-scaled rough Heston model to an Inverse Gaussian L\'{e}vy process.    
			This shows we can obtain such a limit without having to impose that the true Hurst exponent $H$ for the model is $\half$ as in \cite{AC24}, 
			or that $H\searrow -\half$ as in \cite{AAR25}, so the result potentially has increased financial relevance.
			We later extend the analysis to the case where $V$ has jumps, showing weak convergence of the finite-dimensional distributions of the integrated variance to a deterministic time-change of the first-passage time process to lower barriers for a more general class of spectrally positive L\'evy processes. This convergence result is then strengthened to a functional setting, namely on the space of c\`adl\`ag functions on the non-negative half-line endowed with the $M_1$ topology.
			\vspace{2mm}
			\\
			{\bf MSC2020:} 60H20; 45D05; 60F05; 60G22
			\vspace{1mm}\\	
			{\bf Keywords:} Affine Volterra processes with jumps; Rough Heston model; Fast Mean reversion; L\'evy processes hitting times; Volterra Integral equations
		\end{abstract}

        \section{Introduction}\label{sec_Intro}
 \quad  
 Stochastic Volterra equations (SVEs) arise in a variety of areas of applied mathematics, as they provide a natural framework for modeling systems with memory and irregular behavior through the presence of  kernels that drive the dynamics. For instance, they appear as scaling limits of branching processes in chemical and biological interaction models \cite{MS}, or of Hawkes processes in mathematical finance \cite{EFR18}. Volterra-type L\'evy processes are also used in the stochastic modeling of energy markets; see \cite{BBV}. 
 In general, SVEs generate processes that are neither Markov processes nor semimartingales; nevertheless, they provide a theoretically convenient framework that is currently being explored in several directions, some of which are not primarily motivated by applications. For instance, general results on the theory of SVEs, in both the continuous and jump settings, can be found in \cite{A21,ACLP,ALP19,AS, BLP24}. The invariance theory for SVEs has also attracted considerable attention in recent years; see \cite{Alfonsi,ACPPS,BP_feller}. We also refer to \cite{BF,CT20,HA, Z} for studies of SVEs in infinite dimension.  
\sk
 
 In this work, our analysis starts from a Volterra square-root diffusion $V=(V_t)_{t\ge0}$, which is a non-negative process satisfying the dynamics
 \begin{equation}\label{SVE_cont}
    \begin{aligned}
       &V_t =  V_0 \,+\,
	\int_0^t K(t-s) \Big(\lm(\theta-V^{}_s)ds+\sigma\sqrt{V_s} dW_s\Big), 
    \\&\qquad \text{where \quad $K(t)=\frac{1}{\Gamma(\alpha)}t^{\al-1},$ for $\alpha\in \Big(\frac{1}{2},1\Big)$.}
    \end{aligned} 
 \end{equation}
 Here $V_0\ge0$, $\Gamma$ is the Gamma function, $W$ is a standard one-dimensional Brownian motion and $\lambda,\theta$ and $\sigma$ are given positive parameters.  Notice that, in the limiting case $\alpha=1$, \eqref{SVE_cont} reduces to the classical CIR process,  see \cite{CIR}. Weak existence results for \eqref{SVE_cont} can be found in, e.g., \cite{ALP19}.
 
 The process $V$ in \eqref{SVE_cont} has trajectories that are rougher than those of Brownian motion; more precisely, they are $(H-\varepsilon)-$H\"older continuous, as is the case for fractional Brownian motion (see, e.g., Theorem 3.2 in \cite{JR16}). Here $H=\alpha-\frac{1}{2}$ is known as the Hurst index. In the last decade, $V$ has been extensively investigated, especially in the mathematical finance literature, as it describes the variance in the so-called rough Heston stochastic volatility model; see \cite{ER18,ER19}. This model was introduced in \cite{JR16}, where the authors show, using $C$-tightness arguments, that it arises naturally as weak limit of a small jump high-frequency market microstructure model driven by two nearly unstable Hawkes processes. 
One of the main reasons for the popularity of this model is its affine nature; see \cite{ALP19,ER19,GK19}. This property allows to express the characteristic function of the log stock price and the integrated variance in rough Heston-type models in terms of solutions to deterministic non-linear Volterra integral equations (VIEs); see also \cite{A21,BLP24,BPS24,CT20} for extensions allowing jumps in $V$. This formulation enables accurate option pricing even for very small values of $H$ close to  0 (and even for $H=0$) by solving VIEs numerically via an Adams scheme. In particular, it avoids Monte Carlo techniques, which are known to perform poorly when $H\ll 1$, both in terms of bias and sample variance.\\ 

\sk 
In this paper, we study the integrated variance process $A=(A_t)_{t\ge0}$, defined by $A_t=\int_0^t V_s\,ds$. Our goal is to determine weak scaling limits for  $A$ and to characterize these limits as the laws of certain time-changed L\'evy processes. The results we obtain extend the existing literature on weak limits for the integrated variance in rough Heston-type models. They also apply to extensions of the dynamics in \eqref{SVE_cont} that exhibit jumps, in the spirit of \cite{A21,BLP24,BPS24}, for which -- to the best of our knowledge -- no similar results have been established so far. Specifically, we consider dynamics that include
\begin{equation}\label{SVE_j_intro}
   V_t =  V_0 \,+\,
	\int_0^t K(t-s) \Big(\lm(\theta-V^{}_s)ds+\sigma\sqrt{V_s} dW_s + d\tilde{J}_s\Big), 
 \end{equation}
where $K$ is the same fractional kernel as in \eqref{SVE_cont} and $\tilde{J}_\cdot=\int_0^{(\cdot)}\int_{\mathbb{R}_+} x (N^{}(dx,dt)-V^{}_t \nu(dx)dt) $. Here, $N(dx,dt)$ is an integer-valued random measure with compensator  $V_t^{}\nu(dx) dt$, for a  given non-negative measure $\nu$ with positive support s.t. $\nu(\lb 0\rb)=0$ and $\int_{\mathbb{R}_+}  x^2 \nu(dx)<\infty$. Weak existence results for \eqref{SVE_j_intro} are established in \cite{A21}.

In the continuous setting (see \eqref{SVE_cont}), significant contributions in this research direction can be found in \cite{AAR25,AC24}.
More precisely, 
\cite{AC24} show that a re-scaled standard Markov Heston model with fast mean-reversion and large vol-of-vol
(via an $H$ parameter which is not the Hurst exponent) tends weakly on path space to one of three different models (either Black-Scholes, a Normal Inverse Gaussian or a Normal L\'{e}vy model),
depending on whether their $H$ parameter is $>$, $=$, or $<$ $-\half$.
\cite{AAR25} obtain a similar result without Markovian approximations  but instead letting $H\searrow -\half$ for the so-called \textit{hyper-rough Heston model} (see also Section 5 in \cite{FGS21} and Section 7 in \cite{A21} for more on this model),
and exploiting Dirac-type behaviour in their Lemma 2.4 (see Appendix \ref{section:AAR25} here for a short summary/formal derivation of their result).\\
One of the contributions of this work is to fill the gap between \cite{AC24} and \cite{AAR25} by showing that a similar result holds for any $H\in(0,\tfrac12)$ (in particular, our limit regime with $H=\tfrac12$ corresponds to the regime in Eq.\,(0.3) of \cite{AC24}, where their $H=-\tfrac12$). This will follow as a consequence of a more general result presented in Section \ref{sec_jumps} (see Theorem \ref{cor_functional}), where we establish weak convergence on path space for an extended model that allows positive jumps in the dynamics of $V$, see \eqref{SVE_j_intro}. In this case, relying on the affine structure of the model, using the Laplace transform of the hitting time to a lower barrier for a spectrally positive L\'{e}vy process, we find that the limiting process for the integrated variance is a deterministic time-change of the first passage time process to lower barriers  for a more general class of L\'{e}vy processes. The convergence is proved using a compactness argument with the Kolmogorov-Riesz-Fr\'{e}chet theorem that is detailed in Appendix \ref{section:Conv}.
 

$
$


\sk
Although we establish and focus here only on theoretical results, they may prove useful for applications that will be investigated in future research. For instance, in the continuous case \eqref{SVE_cont},
by the stochastic Fubini and Dubins–Schwarz theorems, it is well known that the integrated variance process satisfies an equation of the form
\[
A_t = G_0(t) + \int_0^t \kappa(t-s) B_{A_s}\, ds,
\]
for some Brownian motion $B$, where the deterministic curve $G_0$ and the kernel $\kappa$ are related to the coefficients in \eqref{SVE_cont} (see, e.g., \cite{A21, AAR25} and Theorem 3.1 in \cite{JR20}). This is a non-linear pathwise Volterra integral equation (VIE) for $A$ in terms of the a.s.\ $(\tfrac12-\varepsilon)-$H\"older continuous function $B_{(\cdot)}$, and it remains well defined even if $\kappa\in L^1(0,T)$; this also allows us to consider the so-called hyper-rough regime $H \in (-\tfrac12,0]$.  
If we discretize this equation for $A$ and rewrite it in terms of the final (discrete-time) increments of $A$ and $B$ (see Appendix \ref{section:AAR25} for details), then an independent sequence of Inverse Gaussian random variables can be used to perform an approximate Monte Carlo simulation of $A$ (see Algorithm 1 in \cite{AA25}), which is naturally suited for the regimes $H \ll 1$ and $H\in(-\tfrac12,0]$.

\sk
The paper is organized as follows. In Section \ref{asy_marginal}, we introduce a scaling for the continuous dynamics in \eqref{SVE_cont} to prove the convergence of the time-$t$ marginals of the integrated variance to an Inverse Gaussian process. This scaling is then extended in Section \ref{sec_jumps} to affine SVEs with jumps that include \eqref{SVE_j_intro} (see also Remark \ref{rem:2.1}). Theorem \ref{thm_main} establishes the convergence of the finite-dimensional distributions of the corresponding integral process to a L\'evy subordinator with a deterministic time shift. In Subsection \ref{sub_func}, this convergence result is strengthened to a functional setting. In particular, in Theorem \ref{cor_functional} we prove weak convergence of the laws of the re-scaled integral processes to the law of the aforementioned time-shifted L\'evy subordinator on the space of c\`adl\`ag functions on the non-negative half-line endowed with the $M_1$ topology (see \cite{W} for further details on this and related topological spaces, also used in \cite{AAR25,AC24}). In the appendices, we prove some results  needed for  our analysis of the jump case in Section \ref{sec_jumps}. More precisely, 
Appendix \ref{section:AlessE+U} studies the well-posedness of the VIE needed to describe the finite-dimensional moment generating function of the integrated variance. Appendix \ref{section:Conv} contains the proof of the main technical lemma of the paper, Lemma \ref{lem:Ale}, which, together with the results in Appendix \ref{section:LT}, enables us to identify the limiting law of the re-scaled integrated variance. Finally, Appendix \ref{section:AAR25} contains a formal derivation of the main idea in \cite{AAR25}. \\
Due to its interpretation as the variance process in  rough Heston-type  models, we will often refer to $V$ in \eqref{SVE_cont} (or, in the jump case, in \eqref{SVE_j_intro}; see Section \ref{sec_jumps}) simply as the ``variance process''.

\section{Marginal Integrated Variance Asymptotics in a Re-Scaled Rough Heston Model}\label{asy_marginal}

\sk
We consider processes defined on (possibly different) probability spaces $(\Omega, \mathcal{F}, \mathbb{Q})$, each equipped with a filtration $(\mathcal{F}_t)_{t \ge 0}$ satisfying the usual conditions. 
For the main convergence result of this section, see Theorem \ref{cont_prop} below, we recall the following definition.
\begin{defn}
A random variable $X$ is said to have an \emph{Inverse Gaussian (IG)} distribution with parameters $\gamma > 0$ and $\delta > 0$, denoted by
$X \sim \mathrm{IG}(\gamma, \delta)$,
if its probability density function is
\[
f_X(x) = \bigg(\frac{\delta}{2\pi x^3}\bigg)^{1/2} 
\exp\bigg\{-\frac{\delta (x-\gamma)^2}{2 \gamma^2 x}\bigg\}, \quad x>0.
\]
\end{defn}
The moment generating function of $X \sim \mathrm{IG}(\gamma,\delta)$ is given by
\begin{equation}\label{mgf_IG}
 \mathbb{E}[e^{p X}] 
= \exp\bigg\{\frac{\delta}{\gamma} \bigg(1 - \sqrt{1 - \frac{2 \gamma^2 p}{\delta}} \bigg) \bigg\}, 
\quad p < \frac{\delta}{2 \gamma^2},
\end{equation}
which shows that the $\mathrm{IG}(\gamma,\delta)$ distribution is infinitely divisible. An IG process with parameters $(\gamma,\delta)$ is a (c\`adl\`ag) L\'evy process $L = (L_t)_{t \ge 0}$ such that $L_1 \sim \mathrm{IG}(\gamma,\delta)$. This process exists and is unique in law; see Theorem 7.10 in \cite{Sato99}. As is well known (see, for instance, Example 1.3.21 in \cite{A}), an $\mathrm{IG}(\gamma,\delta)$ process is given by 
\begin{equation}\label{constr_IG}
L_t=\inf\bigg\{s\ge 0 : B_s+ \frac{\sqrt{\delta}}{\gamma}s= \sqrt{\delta}t \bigg\},
\end{equation}
where $B$ is a standard one-dimensional Brownian motion.

In the next result, Theorem \ref{cont_prop}, we show that the time-$t$ marginals of the integrated solution to a re-scaled version of \eqref{SVE_cont} converge to those of an IG L\'evy process. This result is of interest as it demonstrates that such an IG process can be obtained without requiring $H=\alpha-\frac12$, where $\alpha$ denotes the exponent in the kernel $K$ driving \eqref{SVE_cont}, to be equal to $\frac12$ as in \cite{AC24}, or to tend to $-\frac12$ as in \cite{AAR25}. Therefore, it extends the existing literature on scaling limits of rough Heston models.\\
The proof relies on the affine structure of \eqref{SVE_cont}, which enables us to express the Laplace transform of the integrated variance  via deterministic Riccati--Volterra equations. This is a well-known property of affine Volterra processes; see, e.g., \cite{ALP19}, which generalizes the corresponding classical results for standard affine processes, for instance those in \cite{DFS}. Thanks to a suitable rescaling, in the proof of Theorem \ref{cont_prop} we show that the moment generating function of the integrated variance  converges to that of an IG process, which implies convergence of the time-$t$ marginals.
\\
The rescaling introduced in Theorem \ref{cont_prop} will then be generalized in Section \ref{sec_jumps} to the case with jumps. In particular, as a corollary of Theorems \ref{thm_main} and \ref{cor_functional}, the convergence in Theorem \ref{cont_prop} will be strengthened to a functional setting; see Remark \ref{cor_continuous}.
\begin{thm}\label{cont_prop}
	Consider a re-scaled rough Heston model for the variance process $V^\e=(V^\e_t)_{t\ge0}$:
	\bq
	V^{\e}_t &=&  V_0 \,+\,
	\frac{1}{\Gamma(\al)}\int_0^t (t-s)^{\al-1} \Big(\frac{1}{\e}\lm(\theta-V^{\e}_s)ds+\frac{1}{\e}\sigma\sqrt{V^{\e}_s} dW_s\Big),\label{eq:Org}
	\eq
	where $V_0\ge0$,  $W$ is a standard one-dimensional Brownian motion, $\al\in (\half,1)$
	and $\lm,\sigma>0$.
	Then, for every $t>0$ fixed, $A^\e_t=\int_0^tV^{\e}_s ds $ tends weakly,  as $\e\to 0$, to the time-$t$ marginal of an Inverse Gaussian L\'{e}vy process with parameters $(\theta,\sigma^{-2}\lambda^2\theta^2)$
	which does not depend on $H=\al-\half$.
\end{thm}
 \begin{proof}
First, we notice that weak existence of a non-negative solution to \eqref{eq:Org} is established in, for instance, Theorem 7.1 of \cite{ALP19}.	
Let $I^{\al}(f)(t)=\frac{1}{\Gamma(\al)}\int_0^t (t-s)^{\al-1}f(s)ds$ denote the $\al$th-order fractional  integral of a function $f$ for $\al \in (\frac12,1)$. Then for $p> 0$
	(which will be sufficient for our purposes when we invoke a classical weak convergence result from \cite{Bill86} below), if we define $A^\e_t=\int_0^tV^\e_s ds,$ by the arguments in, e.g., Section 7 in \cite{ALP19}
	\bq
	\Ex[e^{-p  A^{\e}_t}]&=&e^{V_0 I^{1-\al}\phi_{\e}(t)+\frac{1}{\e}\lm \theta I^1 \phi_{\e}(t)},\label{eq:CF}
	\eq 
	where $\phi_{\e}$ is the unique continuous solution of the non-linear Volterra integral equation (VIE):
	\bq
	\phi_{\e}(t) &=& \frac{1}{\Gamma(\al)} \int_0^t (t-s)^{\al-1}\Big(-p\,-\,  \frac{1}{\e}\lm\phi_{\e}(s) \,+\,  \frac{1}{\e^2}\half \sigma^2\phi_{\e}(s)^2\Big)ds.\nn \,
	\eq In particular, note that $\phi_\e$ also depends on $p$. 
    
    Now let $\phi_{\e}(t)=\e \psi(\e^q t)$.  Then, also using the change of variables $\e^qs=u$,
	\bq
	\e \psi(\e^q t) &=& \frac{1}{\Gamma(\al)} \int_0^t (t-s)^{\al-1}\Big(-p\,-\, \lm \psi(\e^q s)\,+\,  \half \sigma^2\psi(\e^q s)^2\Big)ds\nn\\
	&=& \frac{\e^{-q (\al-1)}}{\Gamma(\al)} \int_0^{\e^q t} (\e^q t -u )^{\al-1}\Big(-p\,-\,  \lm \psi(u)\,+\,  \half \sigma^2\psi(u)^2\Big)du \,\e^{-q}.\nn\,
	\eq
Setting $\e^q t \mapsto t$, we see that
	\bq
	\e \psi(t) 
	&=& \frac{\e^{-q \al}}{\Gamma(\al)} \int_0^t (t -u )^{\al-1}F(\psi(u))du, \,\label{eq:VI}
	\eq
	where $F(w)=-p-\lambda w+  \half \sigma^2 w^2$. If now we let $q=-\frac{1}{\al}$, the VIE \eqref{eq:VI} is independent of $\e$, so 
    \begin{equation}\label{agg_noeps}
        \phi_{\e}(t)=\e \psi\bigg(\frac{t}{\e^{\frac{1}{\al}}} \bigg).
    \end{equation}
	Hence for every $t>0$, from Lemma 4.5 in \cite{FGS21}
	\bq\label{Lemma_4.5}
	\lim_{\e\to\color{black}0}\frac{1}{\e}\phi_{\e}(t) &=& \lim_{t\to\infty}\psi(t) ~\psi(\infty) ~ \frac{1}{\sigma^2} \Big[\lm - \sqrt{\lm^2 + 2 p\sigma^2}\Big]
    \,\,\,\,=:\,\,\,\,
    U_1(p)
	\eq
	for $p >0$. 
    More precisely, Lemma 4.5 in \cite{FGS21} can be applied to  $-\psi$, which solves the following VIE on $\mathbb{R}_+$:
	\[
		-\psi(t)= \frac{1}{\Gamma(\alpha)}\int_{0}^{t}(t-u)^{\alpha-1}F_1(-\psi(u))du,\quad \text{with  }F_1(w)=p-\lm w-  \half \sigma^2 w^2.
	\] 
    Indeed, since $F_1(0)>0$ and $F_1$ is analytic and decreasing on $\mathbb{R}_+$, the aforementioned result in \cite{FGS21} implies that $-\psi(t)$ is monotonically increasing and 
 converges as $t\to \infty$ to the positive root of $F_1$, which coincides with $-U_1(p)$.
	Then for the exponent in \eqref{eq:CF}, knowing from \eqref{agg_noeps}, \eqref{Lemma_4.5} and the monotonicity of $\psi$ that $|\frac{1}{\e}\phi_\e(t)|\le -U_1(p)$ for every $t\ge0$ and $\varepsilon>0$, by the dominated convergence  theorem and \eqref{Lemma_4.5} we see that
	\bq
	V_0 I^{1-\al}\phi_{\e}(t)\,+\, 
	\frac{\lm \theta}{\e}I^1\phi_{\e}(t) =
	\frac{V_0}{\Gamma(1-\al)} \int_0^t (t-s)^{-\al} \phi_{\e}(s)ds \,+\, \lm \theta\int_0^t \frac{1}{\e}\phi_{\e}(s)ds
	&\to& 0\,+\, \lm \theta U_1(p) t \nn
	\eq
	as $\e\to 0$. By \eqref{mgf_IG},
	 $\lm \theta U_1(p)t$ is the log moment generating function of an Inverse Gaussian L\'{e}vy process with parameters $(\theta,\sigma^{-2}\lambda^2\theta^2)$ at time $t$.
	Then from e.g. the solution to Problem 30.4 on Page 573 in  \cite{Bill86}, we conclude that 
	$A^{\e}_t$ tends weakly to the time-$t$ marginal law of an $\mathrm{IG}(\theta,\sigma^{-2}\lambda^2\theta^2 )$ process.
\end{proof}
\sk

\section{Adding jumps into $V^{\e}$}\label{sec_jumps}

\sk
We now assume that the forward variance $\xi^{\e}_t(u):=\Ex[V^{\e}_u|\mc{F}_t]$ satisfies
\bq
d\xi^{\e}_t(u) &=& \kappa_{\e}(u-t)(\sigma \sqrt{V^{\e}_t} dW_t\,+\,d\tilde{J}^{\e}_t),\quad {\color{black}u>t,}\label{eq:xi}
\eq
where $d\tilde{J}^{\e}_t=\int_{\mathbb{R}_+} x (N^{\e}(dx,dt)-V^{\e}_t \nu(dx)dt) $ and $N^{\e}(dx,dt)$ is a (time-inhomogenous) Poisson random measure with (random) intensity $V_t^{\e}\nu(dx) dt$; $\nu$ only has positive support with $\nu(\lb 0\rb)=0$ and $\int_{\mathbb{R}_+}  x^2 \nu(dx)<\infty$, so $\tilde{J}^{\e}$ has positive-only jumps. The kernel $\kappa_{\e}$ is defined by $$\kappa_{\e}(t)= \frac{1}{\e}t^{\al-1}E_{\al,\al}\Big(-\frac{\lm}{\e} t^{\al}\Big),\quad \text{with $\al \in \Big(\half,1\Big)$ and $\lambda>0$,}$$  where $E_{\al,\beta}(z)$ denotes the Mittag-Leffler function. We refer to Remark \ref{rem:2.1} below for relevant examples from the literature (e.g., \cite{BLP24,BPS24, CT20,Cuch22}) that fall within this setting, including the continuous dynamics in \eqref{eq:Org} (where $\nu(dx)\equiv0$).

\sk
The critical observation for the arguments that follow is that $$f^{\al,\lm}(t)= \lm t^{\al-1}E_{\al,\al}(-\lm t^{\al})$$
is a probability density, so  $\lm \kappa_{\e}(\cdot)$ has Dirac-type behaviour as $\e\to 0$.
We also mention that
$
	\int_{t}^{\infty}f^{\alpha,\lambda}(s)\, ds\underset{t\to\infty}{\sim}\frac{1}{\lambda\Gamma(1-\alpha)}t^{-\alpha},
$
which implies that
\begin{equation}\label{asymptotic}
	\int_{t}^{\infty}\lambda \kappa_\e (s)ds \underset{\epsilon\to0}{\sim} \frac{1}{\lambda\Gamma(1-\alpha)}\e t^{-\alpha}
\end{equation}
for $t>0$ (see Appendix A.1 of \cite{ER19} for details on these points).

\sk

The variance process $V^{\e}=(V^{\e}_t)_{t\ge0}$ appearing in \eqref{eq:xi} is predictable, non-negative, has trajectories in $L^1_\text{loc}(\mathbb{R}_+)$ (see e.g. \cite{A21}, \cite{ACLP} and \cite{BLP24}) and satisfies the following SVE of {convolution-type} with jumps: 
	\begin{equation}\label{SVE_jumps}
	V^{\e}_t =  \xi_0^{\e}(t) \,+\,
	\int_0^t \kappa_{\e}(t-s) (\sigma\sqrt{V^{\e}_s} dW_s+d\tilde{J}^{\e}_s)\,,\quad \mathbb{P}\otimes dt-\text{a.e.},
	\end{equation}
	where $\xi_0^\e\in L^1_\text{loc}(\mathbb{R}_+)$ is the initial variance curve. 
	In particular, $V^\e_t=\xi^\e_t(t),\,\mathbb{P}-$a.s., for a.e. $t\in (0,\infty)$. Note that we require $V^\e$ to be non-negative in order to consider the square root in \eqref{SVE_jumps}, while the predictability of $V^\e$   with locally integrable paths ensures that  the stochastic integrals in \eqref{SVE_jumps} are properly defined. We refer to \cite{A21} for weak existence results for \eqref{SVE_jumps}, see also \cite{ACLP}.

	The process $V^\e$ here is an affine Volterra process with jumps and falls under the framework of \cite{BLP24} (see also \cite{BPS24}). 
	In particular, Lemma 1 in \cite{BLP24} establishes the following integrability property of $V^\e$ that we will use for our analysis:
	\begin{equation}\label{L1integrability}
\mathbb{E}\bigg[\int_{0}^{T} V^\e_t\,dt
\bigg]		<\infty,\quad T>0. 
	\end{equation} 
We also refer to Lemma 12 in \cite{BLP24} for a stronger $L^2-$type integrability result which applies to our dynamics in \eqref{SVE_jumps} when $\xi^\e_0\in L^2_\text{loc}(\mathbb{R}_+)$.  {From  \eqref{L1integrability}, we see that}
\[
	\mathbb{E}\bigg[\int_{0}^{T}\bigg(\int_{\mathbb{R}_+}|x|^2\nu(dx)\bigg)V^\e_tdt\bigg]<\infty,\quad T>0,
\]
{and} hence 
\begin{align}\label{tildeJ_mg}
\notag	&\tilde{J}^\e= \int_{0}^{\cdot}\int_{\mathbb{R}_+}x (N^{\e}(dx,dt)-V^{\e}_t \nu(dx)dt ) \text{ is a square-integrable martingale in }[0,T],\\&\qquad \text{ for every }T>0.
\end{align}


\sk
  Note that, 
  in the absence of jumps, \eqref{eq:xi} is the usual equation for the forward variance under the standard rough Heston model,
see e.g. \cite{ER18} or Proposition 2.2 in \cite{FGS21}.  The model in \eqref{eq:xi} can be viewed as a generalized rough Heston model {\color{black}in the spirit of \cite{BPS24}},  where the mean-reversion speed, vol-of-vol, and jump-intensity all scale as $\frac{1}{\e}$.
\vspace{2mm}

For our asymptotic analysis, we require the following assumption.
\begin{Assumption}\label{ass1}
\label{Assumption:2.1}
We assume that  $\xi^{\e}_0(\cdot)$ is non-negative, uniformly bounded and continuous and $\xi^{\e}_0(\cdot)$ tends pointwise to a bounded  continuous function
 $\xi^{0}_0(\cdot)$ as $\e\to 0$.
\end{Assumption}
{\color{black}  In the following remark, we present two important and standard cases in which Assumption \ref{ass1} is satisfied.
}
\begin{rem}
\label{rem:2.1}
Consider the SVE
\bq
V^{\e}_t &=&  V_0 \,+\,
\int_0^t K(t-s)\frac{1}{\e} \Big(\lm(\theta-V^{\e}_s)ds+\sigma\sqrt{V^{\e}_s} dW_s \,+\,d\tilde{J}^{\e}_s\Big)\label{eq:V_alt}
 \eq
with $K(t)=\frac{1}{\Gamma(\alpha)}t^{\al-1}$ (with the same jump structure for $\tilde{J}^{\e}$ as in \eqref{eq:xi}). The weak existence of a solution to \eqref{eq:V_alt} is established in Theorem 2.13 of \cite{A21}. Notice that, in the continuous case  $\nu(dx)=0$ (hence $\tilde{J}^\e=0$), \eqref{eq:V_alt} reduces to \eqref{eq:Org} in Theorem \ref{cont_prop} of Section \ref{asy_marginal}.
\\The SVE \eqref{eq:V_alt} is a special case of Eq. (14) in \cite{BLP24}
with their $g_0\equiv V_0$, $b_0=\frac{1}{\e}\lm \theta$, $B=b_1=-\frac{\lm}{\e}$, $A_0=0$, $A_1=\frac{1}{\e^2}\sigma^2$, $\nu_0=0$, $\nu_1(dx)=\nu(\e dx)$. From the  equation after (A.3)  in \cite{BLP24},  this process is equivalent to \eqref{SVE_jumps}, for which their $R_B=\lm \kappa_{\e}$, 
$E_B=K-R_B*K={\e}\kappa_{\e}$, and $\xi^{\e}_0(\cdot)=g_0-R_B*g_0+E_B*b_0=
V_0-(V_0-
\theta)\int_0^{\cdot} f^{\al,\frac{\lm}{\e}}(s) ds$ (which agrees with Proposition 2.1 in \cite{FGS21}). For  the corresponding forward variance processes, compare Remark 5 in \cite{BLP24} with \eqref{eq:xi}. In this case, from \eqref{asymptotic}, we find that $\xi^{\e}_0(u)\to \theta$ for $u>0$ as $\e\to 0$, which we can also easily obtain from Proposition 2.1 in \cite{FGS21} since $\tilde{J}^\e$
is a martingale, so the jumps do not affect $\xi_0^{\e}(t)=\Ex[V^{\e}_t]$. \vspace{2.5mm}

\noindent
Conversely, if $\xi_0$ in \eqref{SVE_jumps} is independent of $\e$ and given exogenously, then we can find a $g_0$ in \cite{BLP24} consistent with
$\xi_0$ by solving the linear VIE $g_0-R_B*g_0+E_B*b_0=\xi_0(\cdot)$. Specifically, letting 
$ f=\xi_0(\cdot) - E_B*b_0$, we can re-write the VIE as $g_0-R_B*g_0=f$, which has solution $g_0= f - f \ast r$. Here $r$ is the resolvent of the 2nd kind of $-R_B$
(which will depend on $\e$ in general); see Section 2.3 in \cite{GLS90}.
\end{rem}

To prove the main result of this section, Theorem \ref{thm_main} below, we rely on the affine structure of $V^\e$, which enables us to express the Laplace transform of its convolution with locally bounded functions via deterministic non-linear Riccati--Volterra equations. The following lemma establishes the well-posedness of the equations required in our analysis.
\begin{lem}
\label{lem:E+U}
Consider a non-positive locally bounded function $f\in L^\infty_\emph{loc}(\mathbb{R}_+;\mathbb{R}_-)$. Define  the map $G\colon \mathbb{R}_+\times \mathbb{R}_-\to\mathbb{R}$ by 
\bq
G(s,w)&=&f(s) +\half \sigma^2 w^2 +V_1(w),\quad \text{where }V_1(w) = \int_{\mathbb{R}_+} (e^{w x}-1- wx)\nu(dx),\label{eq:G}
\eq
for  $(s,w)\in \mathbb{R}_+\times \mathbb{R}_-.$
Then, for every $\e>0$, there exists a unique continuous non-positive solution $\psi_\e\colon \mathbb{R}_+\to \mathbb{R}_-$ to the Riccati--Volterra equation 
\bq\label{eq:Vint}
\psi_{\e}(t)
&=& \int_0^t \kappa_{\e}(t-s) G(s,\psi_{\e}(s))ds,\quad t\ge 0. \eq
\end{lem}
\begin{proof}
See Appendix \ref{section:AlessE+U}. We notice that $V_1$ in \eqref{eq:G} is well-defined, i.e., $V_1(w)<\infty$ for every $w\le 0$,  since $|e^{wx}-1-wx|\le w^2 |x|^2$ for any $x\in\mathbb{R}_+$.
\end{proof}
Formally, the asymptotic solution to \eqref{eq:Vint} comes from considering its Dirac limit as $\e \to 0$:
\bq\nn
\psi_{0}(t)
&=&  \frac{1}{\lm }G(t,\psi_{0}(t))  \eq
(recall from above that $\lm \kappa_{\e}(\cdot)$ has Dirac-type behaviour as $\e\to 0$, so $\lm \kappa_{\e}(t-s)$ will be concentrated
at $s=t$).
\sk
Re-arranging terms here, we obtain our conjecture limit equation:
	\begin{equation}\label{lim_eq}
		f(t)-\lambda \psi_{0}(t) + \bar{G}(\psi_0(t))=-\lambda \psi_{0}(t) +G(t,\psi_0(t))=0,\quad t\ge0,
\end{equation}
where $G$ is defined as in \eqref{eq:G} and $\bar{G}(w)= \frac{1}{2}\sigma^2w^2 + \int_{\mathbb{R}_+}(e^{xw}-1- xw)\nu(dx)$ for $w\le 0$. In the next lemma, we prove that \eqref{lim_eq} is well-posed. 
\begin{lem}\label{lem_aggiunto}
    For any $f\in L^\infty_\emph{loc}(\mathbb{R}_+;\mathbb{R}_-)$, there exists a unique  solution $\psi_0$ to \eqref{lim_eq}. Furthermore, $\psi_0\in L^\infty_\emph{loc}(\mathbb{R}_+;\mathbb{R}_-)$.
\end{lem}
\begin{proof}
    {Since, by the definition in \eqref{eq:G},  $G(t,0)=f(t)\le 0$ and the function $w\mapsto G(t,w)-\lambda w$ is {\color{black}continuous and decreasing} on $\mathbb{R}_-$, with $G(t,w)-\lambda w\to \infty$ as $w\to-\infty$},
we see that there exists a unique non-positive solution $\psi_{0}\colon\mathbb{R}_+\to \mathbb{R}_-$ to \eqref{lim_eq}. In particular, {\color{black}since $f$ is locally bounded on $\mathbb{R}_+$,} $\psi_0$ is locally bounded on $\mathbb{R}_+$, as well (i.e., $\psi_0\in L^\infty_{\text{loc}}(\mathbb{R_+};\mathbb{R_-})$).
\end{proof}
We now provide the main technical lemma needed in our analysis. It clarifies in which sense the solution $\psi_0$ to \eqref{lim_eq} can be interpreted as the asymptotic solution to \eqref{eq:Vint}.
\begin{lem}
\label{lem:Ale}
Consider the solutions $(\psi_\e)_\e$ and $\psi$ to \eqref{eq:Vint} and \eqref{lim_eq}, respectively. Then, for every $T>0$,
\[
\lim_{\e\to 0}\psi_\e=\psi_0	\quad \mathrm{in\,\,}L^1(0,T).
\]
\end{lem}
\begin{proof}
See Appendix \ref{section:Conv}. 
\end{proof}

\begin{rem}

%
  A variation of constants argument, see Remark 5 in \cite{BLP24} and also Lemma 4.4 in \cite{ALP19}, shows the equivalence between \eqref{eq:Vint} and 
\begin{equation}\label{eq:alt}
\e \psi_{\e}(\tau)= \int_0^{\tau} K(\tau-s)\Big(f(s)-\lm \psi_{\e}(s) \,+\, \half \sigma^2 \psi_{\e}(s)^2+V_1(\psi_{\e}(s))\Big)ds,\quad \tau\ge 0,
\end{equation}
where $K(t)=\frac{1}{\Gamma(\alpha)}t^{\alpha-1}$ is the fractional kernel. 
  According to Lemma \ref{lem:Ale}, the limiting solution as $\e\to 0$ is  $\psi_{0}$
(we test this numerically in Figure \ref{figure_VIE}).
\end{rem}

\begin{figure}
 \begin{center}
 \includegraphics[width=165pt,height=140pt]{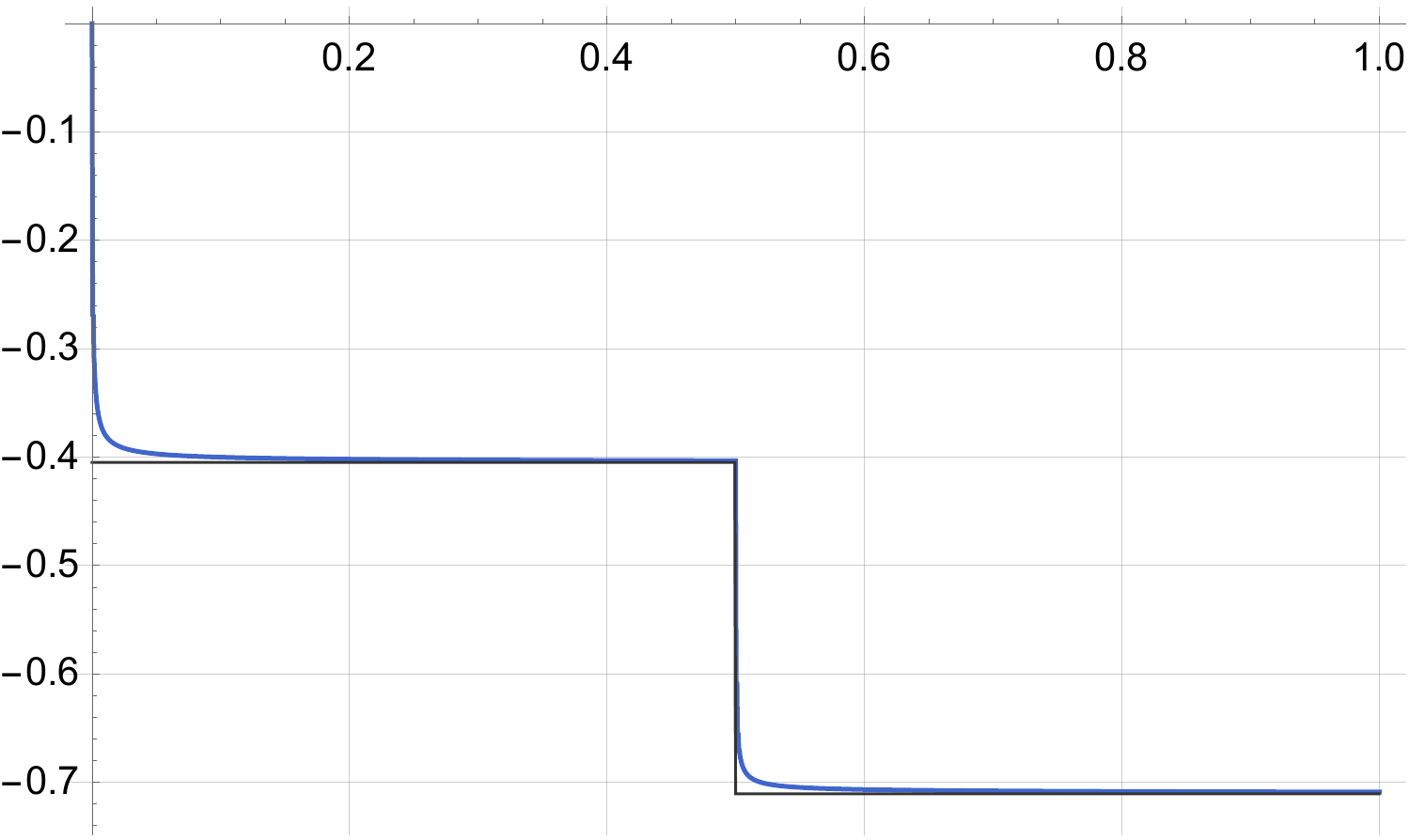} \quad 
\caption{Here we have plotted $\psi_{\e}$ in Eq. \eqref{eq:alt} (in blue) and $\psi_0$ in Eq. \eqref{lim_eq} (grey dashed), using an Adams scheme with 2000 time steps with 
$\e= .01$, $H =0.2$, $\nu= .4$, $\lm = 1$, $T = 1$, $f(s) =-\half (1_{\{s<\half\}} +1_{\{s\le 1\}})$
and $\nu(x)=\frac{C e^{-M x}}{x^{1+Y}}1_{\{x>0\}}$ for $C=1$, $M=3$ and $Y=1.5$. Notice that such a $\nu$ is the L\'evy measure of a one-sided tempered stable (CGMY) process. We see convergence to $\psi_0$ (see e.g. \cite{BL24} for details on refinements to Adams schemes).  Numerically solving the VIE in Eq. \eqref{eq:Vint}
for $\e\ll 1$ appears to be much harder due to the Dirac nature of the kernel.\label{figure_VIE}
}
   \end{center}
 \end{figure}

We now state the main theorem of this section. We note that this result, which concerns the convergence of the finite-dimensional distributions of the integrated process $V^\e$, will be strengthened in Theorem \ref{cor_functional} of Subsection \ref{sub_func}, where weak convergence in the path space of c\`adl\`ag functions endowed with the $M_1$ topology is established. As in the proof of Theorem \ref{cont_prop} in Section \ref{asy_marginal}, the idea behind Theorem \ref{thm_main} consists in exploiting Lemma \ref{lem:Ale} to study the limit of the moment generating function of the finite-dimensional distributions of the re-scaled integrated variance process.
\begin{thm}\label{thm_main} The finite-dimensional distributions of $A^{\e}_{\cdot}=\int_0^{(\cdot)}V^{\e}_s ds$ tend weakly to those of
a time-changed L\'{e}vy process $X_{g(\cdot)}$, where $X_t=\inf \lb s: Z_s< -t\rb$, $Z=(Z_t)_{t\ge0}$ is a L\'{e}vy process with L\'{e}vy triple $(-\lm,\sigma^2,\nu)$, and $g(t)=\lm \int_0^t \xi^0_0(u)du$.
\end{thm}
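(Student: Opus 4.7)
My plan is to derive convergence of joint Laplace transforms for increments of $A^\e$ via the exponential-affine formula of \cite{BLP24,BPS24}, and to use the Dirac-type concentration $\lambda\kappa_\e\to\delta_0$ together with the Kolmogorov-Riesz-Fréchet theorem to pass to the limit in the associated Riccati-Volterra VIE. In the limit this VIE collapses to the pointwise algebraic equation $\Psi_Z(w)=\mu$, where $\Psi_Z(w):=\lambda w+\tfrac{1}{2}\sigma^2 w^2+V_1(-w)$ is the Laplace exponent of the spectrally positive L\'evy process $Z$ with triple $(-\lambda,\sigma^2,\nu)$; its inverse $\Phi:=\Psi_Z^{-1}$ is precisely the Bernstein exponent of the first-passage subordinator $X_t=\inf\{s:Z_s<-t\}$.

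Concretely, for $0=t_0<t_1<\cdots<t_n$ and $\mu_1,\ldots,\mu_n\ge0$, set $\mu(s)=\mu_i$ on $(t_{i-1},t_i]$. The affine Volterra framework of \cite{BLP24,BPS24} yields the exponential-affine formula
\[
\mathbb{E}\bigg[\exp\bigg(-\int_0^{t_n}\mu(s)V^\e_s\,ds\bigg)\bigg]=\exp\bigg(-\int_0^{t_n}\xi^\e_0(s)\,R\big(\psi^\e(s);\mu(s)\big)\,ds\bigg),
\]
with $R(w;\mu):=\mu-\tfrac{1}{2}\sigma^2 w^2-V_1(-w)$, where $\psi^\e\colon[0,t_n]\to[0,\infty)$ is the unique non-negative solution of the backward Riccati-Volterra VIE $\psi^\e(r)=\int_r^{t_n}\kappa_\e(u-r)\,R(\psi^\e(u);\mu(u))\,du$. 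Since $\lambda\kappa_\e$ is a probability density on $(0,\infty)$ concentrating at zero, the algebraic fixed point of $\lambda w=R(w;\mu)$, equivalent to $\Psi_Z(w)=\mu$, is exactly $w=\Phi(\mu)$, and $R(\Phi(\mu);\mu)=\lambda\Phi(\mu)$.

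The core analytic task is to prove $\psi^\e\to\Phi(\mu(\cdot))$ in $L^p(0,t_n)$. I would first establish the uniform bound $0\le\psi^\e\le\Phi(\max_i\mu_i)$ via a comparison/monotone-iteration principle (using that $R(\cdot;\mu)$ is strictly decreasing on $[0,\infty)$, that $\int_0^\infty\lambda\kappa_\e=1$, and that $V_1(-w)\le\tfrac{1}{2}w^2\int x^2\,\nu(dx)$). Combined with the tail control \eqref{asymptotic} for $\kappa_\e$, this allows derivation of a uniform translation-modulus bound $\|\psi^\e(\cdot+h)-\psi^\e(\cdot)\|_{L^p}\to 0$ as $h\to 0$, and Kolmogorov-Riesz-Fr\'echet then gives relative $L^p(0,t_n)$-compactness of $\{\psi^\e\}_{\e>0}$. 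For any $L^p$-limit $\psi^0$ along $\e_k\to 0$, passing to the limit in the VIE using $\lambda\kappa_\e\rightharpoonup\delta_0$ against bounded continuous test functions yields $\lambda\psi^0(r)=R(\psi^0(r);\mu(r))$ a.e., hence $\psi^0(r)=\Phi(\mu(r))$; uniqueness of the limit forces $\psi^\e\to\Phi(\mu(\cdot))$ in $L^p$. Invoking Assumption \ref{ass1} and dominated convergence,
\[
\int_0^{t_n}\xi^\e_0(s)\,R\big(\psi^\e(s);\mu(s)\big)\,ds\longrightarrow\int_0^{t_n}\xi^0_0(s)\,\lambda\Phi(\mu(s))\,ds=\sum_{i=1}^n\Phi(\mu_i)\big(g(t_i)-g(t_{i-1})\big),
\]
so the joint Laplace transform of $(A^\e_{t_i}-A^\e_{t_{i-1}})_{i=1}^n$ tends to the joint Laplace of the independent subordinator increments $(X_{g(t_i)}-X_{g(t_{i-1})})_{i=1}^n$. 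Weak convergence of FDDs follows.

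The main obstacle will be the uniform-in-$\e$ translation-continuity of $\psi^\e$ needed to apply Kolmogorov-Riesz-Fr\'echet: this estimate must be extracted directly from the VIE despite the singularity of $\kappa_\e$ at zero and the control of the jump nonlinearity $V_1(-\cdot)$ only through $\int x^2\,\nu(dx)<\infty$. A minor technicality is the boundary layer of $\psi^\e$ near $r=t_n$ (where $\psi^\e(t_n)=0$), but its $L^p$-mass vanishes in the limit and does not affect the Laplace functional.
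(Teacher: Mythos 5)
Your proposal mirrors the paper's proof step for step: the same exponential-affine Laplace functional for $\int f(T-\cdot)V^\e$, the same uniform $L^\infty$ bound on the Riccati--Volterra solution, the same Kolmogorov--Riesz--Fr\'echet relative compactness in $L^1$ combined with uniqueness of the limit point to obtain $\psi^\e\to\psi_0$ solving the algebraic fixed-point equation, and the same identification of $\lambda\psi_0$ with the Laplace exponent $\Phi=\Psi_Z^{-1}$ of the first-passage subordinator. The only cosmetic differences are a sign/time-reversal convention ($\psi^\e\ge0$ backward in time versus the paper's $\psi_\e\le0$ forward in time) and that you invoke the Bernstein-function characterization of $\Phi$ directly where the paper re-derives the hitting-time Laplace transform via optional stopping in Appendix D.
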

\begin{rem}\label{rem_subordinator}
	$X$ is a L\'{e}vy subordinator, see e.g. Theorem 46.2 in \cite{Sato99}; in particular, if we let $(\tilde{\lambda},0,\tilde{\nu})$ denote the L\'evy triple of $X$ with respect to the truncation function $h(x)\equiv0$, then $X$ has no Gaussian component, $\tilde{\nu}(\mathbb{R}_-)=0$ and $\int_{[0,1]}x\tilde{\nu}(dx)<\infty$, see e.g. Theorem 21.5 in \cite{Sato99}. 
	Moreover, if $\tilde{\nu}(dx)$ has a  density with respect to Lebesgue measure, i.e. $\tilde\nu(dx)=\tilde{\nu}(x)dx$, then the L\'{e}vy Khintchine-type formula in Theorem 25.17 in \cite{Sato99} for $X_T$ is:
	\[
	\Ex[e^{-p X_T}] = e^{T(-{\tilde{\lambda}}p + \int_0^\infty (e^{-px}-1)\tilde{\nu}(x)dx)}\nn
	\]
	for $p\ge 0$.
	Thus $\tilde{V}(-p): = \log\Ex[e^{-p X_1}] = -\tilde{\lambda}p + \int_0^\infty (e^{-px}-1)\tilde{\nu}(x)dx$, and differentiating both sides with respect to $p$, we obtain:
	\bq
	\tilde{V}'(-p) &=& \tilde\lambda + \,\int_0^\infty x{e^{-px}}\tilde{\nu}(x)dx\nn\
	\eq
	for $p\ge 0$, so in principle we can recover $\tilde\nu$ from $\tilde{V}$ by Laplace inversion.
\end{rem}
\begin{proof}
Recall that the dynamics of the variance process $V^\e$ are given in \eqref{SVE_jumps}.
Let $f:[0,\infty)\to (-\infty,0]$ be 
a locally bounded function on $\mathbb{R}_+$ (i.e., $f\in L^\infty_{\text{loc}}(\mathbb{R}_+;\mathbb{R}_-)$).  
  Then from Theorem 5 in \cite{BLP24} and Lemma 6.1 in \cite{A21} (see also Lemma 3.2 in  \cite{BPS24}), we know that for every $T>0$ 
\bq
M_t &:=&e^{\int_0^t f(T-s) V^{\e}_s ds + G_t}\label{eq:CF_},\quad t\in[0,T]
\eq is a martingale if we set 
 \bq
G_t ~ 
\int_0^{T-t}G(u,\psi_{\e}(u))\,\xi^{\e}_t(T-u) du,
\eq
where $G$ is defined in \eqref{eq:G} and $\psi_\e$ 
 satisfies the corresponding non-linear Riccati--Volterra integral equation \eqref{eq:Vint}. 
Existence and uniqueness for $\psi_\e$ are established in Lemma \ref{lem:E+U} above.  
 Since $G_T=0$ we see that
\bq
M_t &=&\Ex[M_T|\mc{F}_t] ~ \Ex\Big[e^{\int_0^T f(T-s) V^{\e}_s ds}|\mc{F}_t\Big]. \nn
\eq
In particular, at $t=0$, taking the expected value we have
\bq
\Ex\Big[e^{\int_0^T f(T-s) V^{\e}_s ds}\Big] ~
e^{\int_0^T  G(s,\psi_{\e}(s))\xi^{\e}_0(T-s) ds}.\label{eq:mgf}
\eq

As a corollary of Lemma \ref{lem:Ale}, we see that
\begin{align}\notag
\notag&\int_0^T |G(s,\psi_\e(s))-G(s,\psi_0(s))| ds 
\\\notag&\qquad\le\int_{0}^{T} \Big(
\frac{1}{2}\sigma^2|\psi_\e(s)+\psi_0(s)|+|\tilde{h}(\psi_{\e}(s),\psi_0(s))|
\Big)
|\psi_\e(s)-\psi_0(s)|ds
\\
&\qquad
\le 
K_1 \int_0^T |\psi_\e(s)-\psi_0(s)| ds \underset{\e\to 0}{\longrightarrow} 0 \label{eq:Sug},
\end{align}
{\color{black}where $\tilde{h}$ is the function defined in \eqref{def_htilde}, {and} \begin{equation*}
	K_1 :=\sup_{\e>0} \Big\|\frac{1}{2}\sigma^2|\psi_\e+\psi_0|+|\tilde{h}(\psi_{\e},\psi_0)|\Big\|_{L^{\infty}(0,T)};
\end{equation*}
{since} $\psi_{0}$ is locally bounded on $\mathbb{R}_+$ by Lemma \ref{lem_aggiunto}, $K_1$ is finite by \eqref{unif_bound} and   $\tilde{h}$ {is continuous}, see Appendix \ref{section:AlessE+U} (in particular, the proof of Lemma \ref{lemA3}).}
 Then
\begin{align*}
&\bigg|\int_0^T \Big(G(s,\psi_\e(s))\xi^\e_0(T-s)-G(s,\psi_0(s))\xi^0_0(T-s)\Big) ds\bigg|\\&\qquad \le
\int_0^T |G(s,\psi_\e(s))-G(s,\psi_0(s))|\,\xi^0_0(T-s) ds  +  \Big(\sup_{\e>0}\|G(\cdot,\psi_\e)\|_{L^\infty(0,T)}\Big) \int_0^T \,|\xi_0^\e(T-s)-\xi^0_0(T-s)| ds,
\end{align*}
where the last two terms tend to zero as $\e \to 0$ by \eqref{eq:Sug} and Assumption \ref{Assumption:2.1}.
Hence {\color{black}from \eqref{lim_eq}} and \eqref{eq:mgf}  we see that
\begin{align}\nn
\lim_{\e \to 0}\Ex\Big[e^{\int_0^T f(T-s) V^{\e}_s ds}\Big] &=\lim_{\e \to 0}e^{\int_0^T  G(s,\psi_{\e}(s)) \xi^{\e}_0(T-s) ds} \\&
= e^{\int_0^T  G(s,\psi_{0}(s)) \xi^0_0(T-s) ds}~  e^{\lm \int_0^T  \psi_{0}(s) \xi^0_0(T-s) ds}\label{eq:this}.
\end{align}

We now characterize the process
which has \eqref{eq:this} as its Laplace transform.
\\
For a  L\'{e}vy process $Z=(Z_t)_{t\ge0}$ with L\'{e}vy triple $(-\lm,\sigma^2,\nu)$, 
$e^{u Z_t- \Lambda(u) t }$ is an $\mc{F}^Z_t-$martingale for $u \le 0$, where 
\begin{equation}\Lm(u)=-\lm u +\half \sigma^2 u^2 +V_1(u);\label{agg_ref}
\end{equation}
this is a consequence of the stationary and independent increments property, together with Theorem 25.17 in \cite{Sato99} (see \eqref{precise} below for an analogous argument).
From Proposition \ref{Prop_LT} in Appendix \ref{section:LT} applied to the spectrally negative process $-Z$ (see also  e.g.\,Theorem 46.3 in \cite{Sato99} or Eq. (2.5) in \cite{KKR13} for an alternative proof), we know that
\bq
\Ex[e^{-q H_a}]~ e^{-\Lambda^{-1}(q)a}~ e^{\Lambda^{-1}(q)|a|}\label{eq:OST}
\eq
for $q\ge 0$ and $a\le0$, where $H_a:=\inf \lb t \ge 0: Z_t<a\rb$ and $\Lambda^{-1}(q)\le 0$ denotes the inverse function of $\Lambda$. 

\sk
Now let $X_t=H_{-t}$, which is a L\'{e}vy subordinator, see Remark \ref{rem_subordinator} and the references therein.  Then from the i.i.d. property for L\'{e}vy processes, \eqref{eq:OST} and the fundamental theorem of calculus,  considering a right-continuous non-positive piecewise constant function ${f}$ we have, for any  continuously differentiable non-decreasing function $g$ starting from 0,
\begin{align}\label{final_recall}
\notag\Ex\Big[e^{\int_0^T {f}(T-s)d(X_{g(s)})} \Big]
&~e^{\int_0^T \Lambda^{-1}(|{f}(T-s)|)g'(s)ds} ~ e^{\int_0^T \Lambda^{-1}(-f(s))g'(T-s)ds} \\&
~
e^{\int_0^T \psi_0(s)g'(T-s)ds},
\end{align} 
where in the last step we use that  $\Lambda^{-1}(-f(\cdot))$ is $\psi_0(\cdot)$ by Eqs. \eqref{lim_eq} and \eqref{agg_ref} (see also Lemma 15.1 in \cite{CT04}, which is used in \cite{AAR25}).
\vspace{2mm}\\
At this  point, for $0\le s_1<\cdots<s_n<T$, choosing a right-continuous map $f$ such that $$f(T-s)=(u_1+u_2+...+u_n) 1_{\{0 \le s \le s_1\}}+  (u_2+...+u_n) 1_{\{s_1 < s \le s_2\}}+...+ u_n 1_{\{s_{n-1} < s \le s_{n}\}},\quad s\in[0,T]$$
with $u_1,u_2,\dots,u_n\le 0$, we see that
\bq
\int_0^T f(T-s) V^{\e}_s ds ~u_1 A^{\e}_{s_1}+\,...\,+ u_n A^{\e}_{s_n},\quad
\int_0^T f(T-s) dX_{g(s)} ~u_1 X_{g(s_1)}+\,...\,+ u_n X_{g(s_n)},\nn
\eq
where $A^{\e}_t=\int_0^t V^{\e}_s ds$. Therefore
 by \eqref{eq:this} and \eqref{final_recall} (again by Problem 30.4 in \cite{Bill86})
 we deduce  that the finite-dimensional distributions
of $A^{\e}_{\cdot}=\int_0^{(\cdot)} V^{\e}_s ds$ converge weakly to those of the time-changed L\'{e}vy process $X_{g(t)}$
with $g'(t)=\lm \xi^0_0(t)$ and $g(0)=0$. The proof is now complete.
 \end{proof}

\subsection{Weak functional convergence in the $M_1$ topology}\label{sub_func}
The purpose of this subsection is to strengthen the convergence result of the finite-dimensional distributions established in Theorem \ref{thm_main}  to a functional setting. More precisely, recalling  the process $X_{g(\cdot)}$ and the integrated variances $A^\e_\cdot=\int_0^{(\cdot)}V^\e_s ds$  in the statement of Theorem \ref{thm_main},  our aim is to prove the weak convergence in law of  $A_\cdot^\e$  to $X_{g(\cdot)}$ as $\e\to 0$ in the path-space of c\`adl\`ag functions $D(\mathbb{R}_+;\mathbb{R})$, endowed with the $M_1$ topology. 
Notice that we work with trajectories defined on the whole  non-negative half-line, instead of fixing a finite time horizon as in \cite{AAR25} or \cite{AC24}, where the $M_1$ topology is also used. \\We consider $D(\mathbb{R}_+;\mathbb{R})$ instead of the space of continuous functions $C(\mathbb{R}_+;\mathbb{R})$ -- where $(A_\cdot^\e)_\e$ take values -- because $X_{g(\cdot)}$ is a c\`adl\`ag process that, by Theorems 21.1 and 21.3 in  \cite{Sato99}, exhibits jumps in an event with positive probability (unless in the trivial case $\xi_0^0(t)=0$ for a.e. $t\ge0$). 
The reason we focus on the $M_1$ topology rather than the arguably more common $J_1$ topology is that, as a consequence of, e.g., Proposition 2.1 a) in Chapter \upperRomannumeral{6} of \cite{JS}, $C(\mathbb{R}_+;\mathbb{R})$ is a closed subset of $D(\mathbb{R}_+;\mathbb{R})$ in the $J_1$ topology. By the Portmanteau theorem, any weak limit probability of $(A_\cdot^\e)_\e$ is then supported on $C(\mathbb{R}_+;\mathbb{R})$, which again excludes the time-shifted subordinator $X_{g(\cdot)}$.

Before stating our main result in Theorem \ref{cor_functional}, we recall some important definitions and properties of $M_1$. A comprehensive analysis of this and other topologies on spaces of c\`adl\`ag functions can be found in \cite{W}, see in particular Chapter 12. We start by presenting the definition of the topology $M_1^{(T)}$ on the space $D([0,T];\mathbb{R})$ of real-valued c\`adl\`ag functions defined on $[0,T]$, for every $T>0$.
\begin{defn}
    Fix $T>0$. Given $x\in D([0,T];\mathbb{R})$, the \emph{thin graph} of $x$ is defined by
    \[
    \Gamma_x=\{(z,t)\in\mathbb{R}\times [0,T] : z\in [x(t-),x(t)]\},\quad \text{where $x(0-)=x(0)$}.
    \]
    The couple $(\Gamma_x, \le)$ is a totally ordered set, where $\le$ is the relation given by:\[
(z_1,t_1)\le (z_2,t_2) \qquad \Longleftrightarrow \qquad \Big[t_1<t_2\Big] \quad \text{or} \quad \Big[t_1=t_2 \,\,\text{ and }\,\,
|x(t_1-)-z_1|\le |x(t_1-)-z_2|\Big].
    \]
    A \emph{parametric representation} of $x$ is a continuous nondecreasing function $(u,r)$ mapping $[0,1]$ onto $(\Gamma_x,\le)$; the set of such parametric representations is denoted by $\Pi(x)$.\vspace{1mm}\\
    For every $x,y\in D([0,T];\mathbb{R})$, define the metric $d_T$ as follows:
    \begin{equation}\label{def_measure}
        d_T(x,y)=\inf_{(u_1,r_1)\in \Pi(x), (u_2,r_2)\in \Pi(y)} \max\{\|u_1-u_2\|_1, \|r_1-r_2\|_1\},
    \end{equation}
    where $\|\cdot\|_1$ denotes the uniform norm in $[0,1]$. 
    The $M_1^{(T)}$ topology is the topology generated by $d_{T}$.
\end{defn}
\noindent
We refer the reader to Theorem 12.3.1 in \cite{W} for the proof that $d_T$ is indeed a metric on $D([0,T];\mathbb{R})$. 

In the sequel, for every $T>0$, we denote by $r_T\colon D(\mathbb{R}_+;\mathbb{R}) \to  D([0,T];\mathbb{R}) $ the restriction operator given by $[r_T(x)] (t)= x(t)$ for $t\in[0,T]$. This operator enables us to introduce the $M_1$ topology on $D(\mathbb{R}_+;\mathbb{R})$, hence for functions defined on the non-negative half-line; see also Section 12.9 in \cite{W}.
\begin{defn}
The $M_1$ topology on $D(\mathbb{R}_+;\mathbb{R})$ is the topology generated by the metric   
    \begin{equation}\label{def_distance}
d_{M_1}(x, y) = \int_{0}^{\infty} e^{-T}\min \big\{d_T(r_T(x), r_T(y)) , 1\big\}\,dT, 
\end{equation}
where, for every $T>0$, $d_T$ is defined in \eqref{def_measure}.

\end{defn}
Despite the metric $d_T$ in \eqref{def_measure} being incomplete, the space $(D([0,T];\mathbb{R}),M_1^{(T)})$ is Polish, see Section 12.8 in \cite{W}. It follows that  also $(D(\mathbb{R}_+;\mathbb{R}), M_1)$ is Polish, and we refer to Theorem 2.6 in \cite{Wpaper} for an analogous argument for the $J_1$ topology. 
As a consequence of the definition of $d_{M_1}$ in \eqref{def_distance} and  Theorem 12.3.2 in \cite{W}, the $M_1$ topology is coarser than the $J_1$ topology. Since they both provide Polish (hence Lusin) spaces, by Theorem 11.5.3 in \cite{W} they generate the same $\sigma-$algebra, which coincides with the classical $\sigma-$algebra generated by the evaluation maps on $D(\mathbb{R}_+;\mathbb{R})$, see also Theorem 1.14 in Chapter \upperRomannumeral{6} of \cite{JS}. \vspace{2mm}

Denote by $\mathbb{P}^\e$ and $\mathbb{P}$ the probability measures induced by $A_\cdot^\e$ and $X_{g(\cdot)}$ on $\sigma(M_1)$, respectively.
The next lemma enables us to reduce the study of the weak convergence of   $\mathbb{P}^\e\, \Longrightarrow \,\mathbb{P}$ on $(D(\mathbb{R}_+;\mathbb{R}), M_1)$ from an infinite to an arbitrary finite time horizon. More precisely, we have the following.
\begin{lem}\label{lem_reduction}
    For any $T>0$, let $\mathbb{P}r_T^{-1}$ be the pushforward probability measure on $\sigma(M_1^{(T)})$ of $\mathbb{P}$ by the restriction operator $r_T$, with an analogous notation for $\mathbb{P}^{\e}$. Then $\mathbb{P}^\e\,\Longrightarrow \,\mathbb{P}$ on $(D(\mathbb{R}_+;\mathbb{R}), M_1)$ if and only if, for every $T>0$,  $\mathbb{P}^\e r_T^{-1}\,\Longrightarrow \,\mathbb{P}r_T^{-1}$ on $(D([0,T];\mathbb{R}), M_1^{(T)})$.
\end{lem}
\begin{proof}
  Since $X_{g(\cdot)}$ is a time-changed L\'evy process with continuous time change $g$ (we recall from the statement of Theorem \ref{thm_main} that $g(\cdot)=\lm \int_0^{(\cdot)} \xi^0_0(u)du$), for any fixed $T>0$,
\[
\mathbb{P}\big(\{x\in D(\mathbb{R}_+;\mathbb{R}) : x(T)-x(T-)\neq 0\}\big)=0.
\]
Moreover, by Theorem 12.9.3 in \cite{W}, the restriction map $r_T$ is continuous at all $x\in D(\mathbb{R}_+;\mathbb{R})$ that do not exhibit a jump at $T$. Combining these two facts, the proof can be completed using the same arguments (based on the continuous mapping and Portmanteau theorems) as in Theorem 2.8 of \cite{Wpaper}, which treats the case of the $J_1$ topology.
\end{proof}
At this point, we are ready to state the main result of this subsection. Its proof relies on tightness characterizations of sets of probability measures on $(D([0,T];\mathbb{R}), M_1^{(T)})$ for $T>0$, established in Theorem 12.12.3 in \cite{W}. These conditions are particularly simple to apply since the processes $(A_\cdot^\e)_{\e>0}$ are non-decreasing.
\begin{thm}\label{cor_functional}
The integrated variance processes $A_\cdot^\e$ converge weakly to the time-shifted L\'evy process $X_{g(\cdot)}$ defined in Theorem \ref{thm_main} on $(D(\mathbb{R}_+;\mathbb{R}), M_1)$,  that is, $\mathbb{P}^\e\,\Longrightarrow \,\mathbb{P}$ on $(D(\mathbb{R}_+;\mathbb{R}), M_1)$ as $\e\to 0$.
\end{thm}
\begin{proof}
According to Lemma \ref{lem_reduction}, it is sufficient to prove that 
\begin{equation}\label{con_fth}
    \mathbb{P}^\e r_T^{-1}\,\Longrightarrow \,\mathbb{P}r_T^{-1} \text{ on $(D([0,T];\mathbb{R}), M_1^{(T)})$, for every $T>0$.}
\end{equation}
In turn, by Theorem \ref{thm_main} and Theorem 11.6.6 in \cite{W},  \eqref{con_fth} is satisfied if the family of probability measures $(\mathbb{P}^\e r_T^{-1})_\e$ is tight on $(D([0,T];\mathbb{R}), M_1^{(T)})$. According to Theorem 12.12.3 in \cite{W}, sufficient (and necessary) conditions for tightness in the space $(D([0,T];\mathbb{R}), M_1^{(T)})$ are given by
\[
\lim_{R\to\infty}\sup_{\e>0}\mathbb{P}\Big(\sup_{t\in[0,T]}|A^\e_t|>R\Big)=0
\]
and 
\[
    \lim_{\delta\to 0 }\sup_{\e>0}\mathbb{P}(w_T(A_\cdot^\e, \delta)>\eta)=0,\quad \text{for any fixed $\eta>0$}.
\]
Here, denoting by $\text{dist}(t_2,[t_1,t_3])=\inf_{t\in[t_1,t_3]}|t_2-t|$ the distance between $t_2$ and the segment joining $t_1$ and $t_3$ (with $t_i\in\mathbb{R},\,i=1,2,3$),  we set, for any $\delta>0$,
\begin{align*}
    &w_T(A_\cdot^\e,\delta) =  \max\Big\{\sup_{0\le t \le T}w_T'(A^\e_\cdot,t,\delta) , v_T(A^\e_\cdot,0,\delta), v_T(A_\cdot^\e,T,\delta)\Big\};\\&
w'_T(A_\cdot^\e,t,\delta) = \sup_{\max\{0,t-\delta\} \le t_1 < t_2 < t_3 \le \min\{t+\delta, T\}} 
\text{dist}\big(A^\e_{t_2}, [A^\e_{t_1},A^\e_{t_3}]\big),\quad t\in [0,T];\\&
v_T(A^\e_\cdot,t,\delta) = \sup_{\max\{0, t-\delta\} \le t_1 \le t_2 \le \min\{t+\delta, T\}} 
|A^\e_{t_1} - A^\e_{t_2}|,\quad t\in [0,T].
\end{align*}
Since the processes $A_\cdot^\varepsilon$ start from $0$ and are non-decreasing, being obtained by integrating the non-negative variance processes $V^\varepsilon_\cdot$, as already observed in Section 4.3 of \cite{AAR25}, these conditions simplify. In particular, the former reduces to
\begin{equation}\label{cond_1}
\lim_{R \to \infty} \sup_{\varepsilon>0} \mathbb{P}\big( A^\varepsilon_T > R \big) = 0.
\end{equation}
As for the latter,  considering that, for every $t\in[0,T]$,  $v_T(A^\e_\cdot,t,\delta)=A^\e_{\min\{t+\delta,T\}}-A^\e_{\max\{t-\delta,0\}}$ and 
$$
A^\varepsilon_{t_2}\in[A^\varepsilon_{t_1},A^\varepsilon_{t_3}] \text{ for all $t_1\le t_2\le t_3$, \quad hence $w'_T(A^\varepsilon_\cdot,t,\delta)=0$},$$  it reads
\begin{equation}\label{cond_2}
\lim_{\delta \to 0} \sup_{\varepsilon>0} \mathbb{P}\Big(\max\big\{ A^\varepsilon_\delta, A^\varepsilon_T - A^\varepsilon_{T-\delta}\big\} > \eta \Big) = 0,
\quad \text{for all fixed $\eta>0$}.
\end{equation}
To complete the proof, it will then be sufficient to verify \eqref{cond_1} and \eqref{cond_2}. For every $\e>0$ and $T\ge0$, integrating \eqref{SVE_jumps} on $[0,T]$, an application of the stochastic Fubini theorem yields (see Lemma 3.2 in \cite{ACLP}, and also Lemma 2.1 in \cite{A21})
\begin{equation*}
    A^\e_T= \int_0^T \xi_0^\e (s)ds+\int_0^T\kappa_\e(T-s) \bigg(
    \sigma\int_0^s \sqrt{V_r^\e}d W_r + \tilde{J}^\e_s 
    \bigg)
    ds,
\end{equation*}
where we recall that $\tilde{J}^\e=(\tilde{J}^\e_t)_{t\ge0}$ is the process defined in \eqref{tildeJ_mg}.
Taking expectations, 
\begin{equation}\label{exp_functional}
    \mathbb{E}[A^\e_T]=\int_0^T\xi^\e_0 (s)ds+\int_0^T\kappa_\e(T-s) \mathbb{E}\bigg[
    \sigma\int_0^s \sqrt{V_r^\e}d W_r + \tilde{J}^\e_s 
    \bigg]
    ds=\int_0^T\xi^\e_0(s) ds,
\end{equation}
where the last step follows from the martingale property of the processes  $\int_0^\cdot \sqrt{V_r^\e}dW_r$ and $\tilde{J}^\e$, ensured by \eqref{L1integrability} and \eqref{tildeJ_mg}, respectively. Since by Assumption \ref{ass1} the maps $\xi_0^\e(\cdot)$ are uniformly bounded,  by Markov's inequality, \eqref{exp_functional} implies \eqref{cond_1}. As for \eqref{cond_2}, it follows by the same arguments, once we notice that, by \eqref{exp_functional},  
\[
\sup_{\e>0}\max \Big\{\mathbb{E}[A^\e_\delta],\mathbb{E}[A^\e_T-A^\e_{T-\delta}]\Big\}\le  \delta \sup_{\e>0,t\in [0,T]}|\xi_0^\e(t)|\longrightarrow 0 \quad \text{as }\delta\to 0.
\]
Therefore, the proof is complete.
\end{proof}
\begin{rem}\label{cor_continuous}
   If we consider the continuous solution $V_\cdot^\e$ to \eqref{eq:V_alt} without jumps, i.e., $\nu(dx)=0$ (hence $\tilde{J}^\e=0$), then $V^\e_\cdot$ coincides with \eqref{eq:Org} in Theorem \ref{cont_prop} of Section \ref{asy_marginal}. Recalling the arguments in Remark \ref{rem:2.1}, an application of Theorem \ref{thm_main} yields that the finite-dimensional distributions of 
$
A^\e_\cdot = \int_0^{(\cdot)} V^\e_s\, ds
$
converge to those of $X_{\lambda \theta (\cdot)}$, where $X_\cdot$ is the L\'evy subordinator given by
\[
X_t = \inf\Big\{ s \ge 0 : B_s + \frac{\lambda}{\sigma} s = \frac{1}{\sigma} t \Big\},
\]
for a standard one-dimensional Brownian motion $B$. 
By \eqref{constr_IG}, we infer that $X_{\lambda \theta (\cdot)}$ is an $\mathrm{IG}$ process with parameters $(\theta, \sigma^{-2}\lambda^2 \theta^2)$. Therefore, in the case of continuous dynamics, Theorem \ref{thm_main} extends Theorem \ref{cont_prop} by proving the convergence of the finite-dimensional distributions, rather than only that of the marginals at a fixed time $t$. Furthermore, Theorem \ref{cor_functional} implies that the laws of $A^\e_\cdot$ converge, as $\e \to 0$, to the law of $X_{\lambda \theta (\cdot)}$ weakly on the path space $(D(\mathbb{R}_+;\mathbb{R}), M_1)$.
\end{rem}

 \begin{appendix}

\section{Proof of Lemma \ref{lem:E+U}}
\label{section:AlessE+U}
\renewcommand{\theequation}{A-\arabic{equation}}
\setcounter{equation}{0}

We first recall the Riccati--Volterra integral equation in \eqref{eq:G}-\eqref{eq:Vint}, which we re-write as
	\begin{align}\label{RV_eq}
	\notag	\psi_{\e}(t)&=\int_{0}^{t}\kappa_{\e}(t-s)f(s) ds + \int_{0}^{t}\kappa_{\e}(t-s)\bigg(\frac{1}{2}\sigma^2\psi_{\e}^2(s) + \int_{\mathbb{R}_+}(e^{x\psi_{\e}(s)}-1- x\psi_{\e}(s))\nu(dx)\bigg)ds\\
		& =
		(\kappa_{\e}\ast f) (t) + \Big(\kappa_\e \ast \Big(\frac{1}{2}\sigma^2\psi^2_{\e}+V_1(\psi_{\e})\Big)\Big)(t),\quad t\ge 0. 
	\end{align}
By Theorem 3.1, Chapter 5 in \cite{GLS90}, $\kappa_{\e}$ is completely monotone,
and by Theorem 2.2, Chapter 2 in \cite{GLS90}, 
 $t\mapsto (\kappa_{\e}\ast f) (t) $ is continuous on $\mathbb{R}_+$ (as $f\in L^\infty_{\text{loc}}(\mathbb{R}_+;\mathbb{R}_-)$).  Moreover, the function $\widetilde{G}$ defined by the relation
\[
	\widetilde{G}(w)-\frac{1}{2}\sigma^2w^2 = \begin{cases}
		0,& w>0\\
		\int_{\mathbb{R}_+}(e^{xw}-1- xw)\nu(dx), & w\le 0
	\end{cases}
\]
is continuous and non-negative on $\mathbb{R}$. Then by Theorem 1.1, Chapter 12 in \cite{GLS90}, there exists a continuous noncontinuable 
 local solution $\widetilde{\psi}_{\e}$ of the equation
\begin{equation}\label{RV_mod}
	\widetilde{\psi}_{\e}(t)=(\kappa_{\e}\ast f) (t) + (\kappa_\e \ast \widetilde{G}(\widetilde{\psi}_{\e}))(t),\quad t \in [0,T_\text{max}),
\end{equation}
for {some} $T_\text{max}>0$. 
Then, {\color{black}noting that $\widetilde{G}=\frac{1}{2}\sigma^2(\cdot)^2+V_1(\cdot)$ on $\mathbb{R}_-$}, from the following lemma    we know that $\widetilde{\psi}_{\e}$ also solves \eqref{RV_eq} on $[0,T_\text{max})$:


\begin{lem}\label{lem2.4}
$\widetilde{\psi}_\e$ is non-positive.
\end{lem}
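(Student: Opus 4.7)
The plan is to recast the Riccati--Volterra equation \eqref{RV_mod} as an equivalent Caputo fractional ODE and then argue by contradiction using a fractional maximum principle. The first step will be to observe that the Mittag--Leffler kernel $\kappa_\e(t)=\frac{1}{\e}t^{\al-1}E_{\al,\al}(-\frac{\lm}{\e}t^\al)$ has Laplace transform $(\e z^\al+\lm)^{-1}$, so it coincides with the fundamental solution of the operator $\e D^\al_C+\lm$ (with $D^\al_C$ the Caputo derivative of order $\al$) acting on functions vanishing at $0$. Since $g:=f+\widetilde{G}(\widetilde{\psi}_\e)\in L^\infty_{\text{loc}}(\mathbb{R}_+)$ (as $f$ is locally bounded and $\widetilde{G}\circ\widetilde{\psi}_\e$ is continuous), the convolution identity $\widetilde{\psi}_\e=\kappa_\e\ast g$ will translate into
\[
\e\,D^\al_C\widetilde{\psi}_\e(t)+\lm\,\widetilde{\psi}_\e(t)=f(t)+\widetilde{G}(\widetilde{\psi}_\e(t)),\qquad\widetilde{\psi}_\e(0)=0.
\]

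I will then argue by contradiction: suppose $\widetilde{\psi}_\e>0$ somewhere in $[0,T_{\max})$, fix any $\theta\in(0,2\lm/\sigma^2)$ (a non-empty interval since $\lm,\sigma>0$), and consider the first-passage time $t_\theta:=\inf\{t>0:\widetilde{\psi}_\e(t)=\theta\}\in(0,T_{\max})$. By continuity and $\widetilde{\psi}_\e(0)=0$ the point $t_\theta$ is a maximum of $\widetilde{\psi}_\e$ on $[0,t_\theta]$ that strictly exceeds $\widetilde{\psi}_\e(0)$, so the Luchko-type extremum principle for the Caputo derivative will give $D^\al_C\widetilde{\psi}_\e(t_\theta)\ge\theta/(t_\theta^{\,\al}\,\Gamma(1-\al))>0$. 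Substitution into the fractional ODE at $t_\theta$, together with $\widetilde{G}(\theta)=\tfrac12\sigma^2\theta^2$ from the definition of $\widetilde{G}$ on $\mathbb{R}_+$, will then yield
\[
0<\e\,D^\al_C\widetilde{\psi}_\e(t_\theta)=f(t_\theta)+\tfrac12\sigma^2\theta^2-\lm\theta\le\theta\bigl(\tfrac12\sigma^2\theta-\lm\bigr)<0,
\]
the last strict inequality holding because $f(t_\theta)\le0$ and $\theta<2\lm/\sigma^2$. This contradiction will show that $\widetilde{\psi}_\e$ cannot reach any value in $(0,2\lm/\sigma^2)$, and so by continuity and $\widetilde{\psi}_\e(0)=0$ we will conclude $\widetilde{\psi}_\e\le0$ throughout $[0,T_{\max})$.

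The main obstacle I anticipate is the rigorous pointwise application of the Caputo extremum inequality to $\widetilde{\psi}_\e$, which a priori is only continuous. I would address this using the representation $\widetilde{\psi}_\e=\kappa_\e\ast g$ with $\kappa_\e\in L^1_{\text{loc}}(\mathbb{R}_+)$ and $g\in L^\infty_{\text{loc}}(\mathbb{R}_+)$ to supply the absolute continuity on compacts that Luchko-type inequalities require. As a fallback, one can avoid pointwise fractional derivatives altogether by running the same contradiction in the integral form, using the resolvent identity $\kappa_\e+(\lm/\e)\,K\ast\kappa_\e=K/\e$ with $K(t)=t^{\al-1}/\Gamma(\al)$ to derive the relevant one-sided bound on $\widetilde{\psi}_\e(t_\theta)$ directly from the Volterra equation.
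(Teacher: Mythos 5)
Your route — converting the Volterra equation into a Caputo fractional ODE via the Laplace identity $\widehat{\kappa}_\e(s)=(\e s^{\al}+\lm)^{-1}$ and then invoking a Luchko-type extremum principle at a first-passage time — is genuinely different from the paper's. The paper instead factors the nonlinearity as $\widetilde{G}(w)=(\tfrac12\sigma^2 w + h(w))\,w$ with the auxiliary map $h$ in \eqref{def_h}, so that \eqref{RV_mod} becomes a linear Volterra equation in $\widetilde{\psi}_\e$ with a state-dependent coefficient and a non-positive forcing $\kappa_\e * f$, and then applies the sign-preservation result Theorem C.1 of \cite{AE19}. That argument lives entirely at the integral level and requires no regularity of $\widetilde{\psi}_\e$ beyond continuity.

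The regularity you wave at is exactly where the gap sits. The extremum inequality $D^{\al}_C f(t_0)\ge\frac{f(t_0)-f(0)}{\Gamma(1-\al)t_0^{\al}}$ in the form you quote is derived by integration by parts and needs $f\in C^1$ (or at least H\"older of order strictly greater than $\al$) so that the boundary term $(t_0-s)^{-\al}(f(s)-f(t_0))$ vanishes as $s\to t_0$ and the Marchaud remainder $\int_0^{t_0}(t_0-s)^{-\al-1}(f(t_0)-f(s))ds$ converges. Here $\widetilde{\psi}_\e=\kappa_\e*g$ with $\kappa_\e(t)\sim c\,t^{\al-1}$ near $0$ and $g\in L^\infty_{\mathrm{loc}}$; this yields H\"older continuity of order exactly $\al$, not absolute continuity, so neither of those two facts is available. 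Moreover, since $f$ is only $L^\infty_{\mathrm{loc}}$ and possibly discontinuous, the fractional ODE $\e D^{\al}_C\widetilde{\psi}_\e+\lm\widetilde{\psi}_\e=g$ is an a.e. identity, and nothing guarantees it holds at the one specific point $t_\theta$ you need.

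The fallback in integral form does not recover the contradiction either. Using $\e\kappa_\e+\lm K*\kappa_\e=K$ one gets, at $t_\theta$,
\[
\e\theta \;=\; \int_0^{t_\theta}K(t_\theta-s)\Big(f(s)+\widetilde{G}(\widetilde{\psi}_\e(s))-\lm\,\widetilde{\psi}_\e(s)\Big)\,ds.
\]
On the part of $[0,t_\theta]$ where $\widetilde{\psi}_\e(s)\in[0,\theta]$ the integrand is indeed $\le 0$, but on the part where $\widetilde{\psi}_\e(s)<0$ we have $\widetilde{G}(\widetilde{\psi}_\e(s))\ge 0$ and $-\lm\widetilde{\psi}_\e(s)>0$, so the integrand is positive and can be large; no sign contradiction follows. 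What makes the Caputo argument work is that $t_\theta$ is a running maximum, so the Marchaud memory kernel $(t_\theta-s)^{-\al-1}(\widetilde{\psi}_\e(t_\theta)-\widetilde{\psi}_\e(s))$ has a fixed sign over the whole history; replacing it by $K(t_\theta-s)$ acting on $g-\lm\widetilde{\psi}_\e$ destroys that structure. The paper's linear-factorization plus comparison-theorem argument sidesteps all of these issues.
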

\begin{proof}
{For convenience we define} $h\colon  \mathbb{R}\to \mathbb{R}_-$ by 
\begin{equation}\label{def_h}
	h(w)=\begin{cases}
		\frac{1}{w}\int_{\mathbb{R}_+} (e^{wx}-1-wx)\nu(dx),&w < 0,\\
		0,&w\ge0;
	\end{cases}
\end{equation}
(which is continuous and non-positive), so
$\frac{1}{2}\sigma^2 w^2+w\cdot h(w)=\widetilde{G}(w)$ for {$w\in \mathbb{R}$}.

Then, for every $T\in (0,T_\text{max})$, from \eqref{RV_mod} 
\[
\widetilde{\psi}_\epsilon(t)=
(\kappa_{\e}\ast f) (t) + \int_{0}^{t}\kappa_\e(t-s) \bigg(\frac{1}{2}\sigma^2 \widetilde{\psi}_{\e}(s)+ h(\widetilde{\psi}_{\e}(s))\bigg)\widetilde{\psi}_{\e}(s)ds
,\quad t\in [0,T].
\]
By Remark B.6 in \cite{AE19} (which allows to consider a possibly discontinuous function $f$) and recalling that $f\le 0$, this reformulation enables us to use Theorem C.1 in \cite{AE19} and conclude that $\widetilde{\psi}_\e\le 0$ in $[0,T_\text{max})$, as $T$ is arbitrary.
\end{proof}

\sk
\sk
Thanks to the discussion above and Lemma \ref{lem2.4}, we can consider a continuous noncontinuable non-positive local solution $\psi_\e$ to \eqref{RV_eq}. In the next lemma, we show that $\psi_\e$ is in fact a global solution, i.e., it is defined on the entire $\mathbb{R}_+$.
\begin{lem}
    Any continuous noncontinuable $\mathbb{R}_--$valued solution $\psi_\e$ to \eqref{RV_eq} is globally  defined on $\mathbb{R}_+$.
\end{lem}
\begin{proof}
   Recalling the definition of $V_1$ in \eqref{eq:G}, we notice  that {\color{black}$\frac{1}{2}\sigma^2\psi_\e^2+V_1(\psi_\e)\ge0$} in $[0,T_\text{max})$, so \eqref{RV_eq} yields
\begin{equation}\label{bound}
	(\kappa_{\e}\ast f) (t)\le \psi_{\e}(t) \le 0,\quad t\in [0,T_\text{max}). 
\end{equation}
{\color{black}
Since $\kappa_{\e}\ast f$ is continuous and dominates $\psi_{\e}$ on $[0,T_{\text{max}})$,
$\psi_{\e}$ cannot explode to $-\infty$ at $T_{\text{max}}$. Then, given that by Theorem 1.1, Chapter 12 in \cite{GLS90},
\[
	\limsup_{t\to T_{\text{max}}}|\psi_\e(t)|=\infty\quad \text{if }T_{\text{max}}<\infty,	
\]
we conclude that $T_\text{max}=\infty$.}
\end{proof}
To complete the proof of Lemma \ref{lem:E+U}, it only remains to establish uniqueness for \eqref{RV_eq}. We do this in the following lemma.
\begin{lem}\label{lemA3}
    If $\psi_1$ and $\psi_2$ are two continuous $\mathbb{R}_--$valued global solutions of \eqref{RV_eq},  then $\psi_1=\psi_2$ on $\mathbb{R}_+$
\end{lem}
\begin{proof}
    Consider $\psi_1$ and $\psi_2$ as in the statement of the lemma. Then {setting} $\delta=\psi_1-\psi_2$, \begin{align}\label{eq_uniq}
\nn	\delta(t)=&\int_{0}^{t}\kappa_{\e}(t-s)\bigg(\frac{1}{2}\sigma^2(\psi_1(s)+\psi_2(s))\delta(s) \\&+ \int_{\mathbb{R}_+}\Big(e^{x\psi_1(s)}-e^{x\psi_2(s)}-x(\psi_1(s)-\psi_2(s))\Big)\nu(dx)\bigg)ds.
\end{align}
Now let $\mathbb{R}^2_-=\{(w_1,w_2)\in \mathbb{R}^2, w_1\le0 \text{ and }w_2\le 0\}$ denote the negative quadrant of the plane, and define the auxiliary function $\tilde{h}\colon \mathbb{R}^2_-\to \mathbb{R}$ by 
\begin{equation}\label{def_htilde}
\tilde{h}(w_1,w_2)=\begin{cases}
	\frac{1}{w_1 -w_2 }\int_{\mathbb{R}_+} (e^{xw_1}-e^{xw_2}-x(w_1 -w_2))\nu(dx),&w_1\neq w_2,\\
	\int_{\mathbb{R}_+}x(e^{xw_1}-1)\nu(dx),&\text{otherwise}.
\end{cases}
\end{equation}
The map $\tilde{h}$ is non-positive on its domain $\mathbb{R}^2_-$, because $w\mapsto e^{xw}-xw$ is nonincreasing  on $\mathbb{R}_-$ for every $x\in \mathbb{R_+}$. 
{\color{black}By the dominated convergence theorem, $\tilde{h}$ is continuous on 
$\mathbb{R}^2_- \setminus \{(w,w) : w \le 0\}$. To prove the continuity at the points $(w_1,w_2)\in\mathbb{R}^2_-$ with $w_1=w_2$, consider two sequences $(w_{1,n})_n, (w_{2,n})_n\subset \mathbb{R}_-$ such that \[
	w_{1,n}\neq w_{2,n}\quad  \text{and}\quad  \lim_{n\to \infty }w_{1,n}=w_1=w_2=\lim_{n\to \infty }w_{2,n}.
	\]
	Without loss of generality, suppose that $w_{1,n}>w_{2,n}$ for every $n\in \mathbb{N}$. We then compute, using the inequality $|e^u-1-u|\le |u|^2$ for $u\in \mathbb{R}_-$,
	\begin{align*}
		&	\Big|\tilde{h}(w_{1,n},w_{2,n})-\tilde{h}(w_1,w_2)\Big|\\
		&\quad \le
		\frac{1}{|w_{1,n}-w_{2,n}|}\int_{\mathbb{R}_+}
		e^{xw_{1,n}} \Big|
		e^{x(w_{2,n}-w_{1,n})}-1- x(w_{2,n}-w_{1,n})
		\Big|
		\nu(dx)+
		\int_{\mathbb{R}_+}x\Big|e^{xw_{1,n}}-e^{xw_{1}}\Big|\nu(dx)
		\\
		&\quad \le 
		\bigg(	\int_{\mathbb{R}_+}|x|^2\nu(dx)|\bigg)|w_{2,n}-w_{1,n}|+\text{o}(1)
		\underset{n\to\infty}{\longrightarrow}0.
	\end{align*}
	Considering that $\tilde{h}(w_{1,n},w_{1,n})\to \tilde{h}(w_1,w_2)$ by dominated convergence, the previous computations prove the continuity of $\tilde{h}$ at $(w_1,w_2)$ with $w_1=w_2$. }

\sk

From the definition of $\tilde{h}$, we have
\[
\int_{\mathbb{R}_+}\Big(e^{x\psi_1(s)}-e^{x\psi_2(s)}-x(\psi_1(s)-\psi_2(s))\Big)\nu(dx)
	=
	\tilde{h}(\psi_1(s),\psi_2(s))\delta(s),\quad s\ge 0,
\]
hence (from \eqref{eq_uniq}) $\delta$ solves the linear VIE:
\[
		\delta(t)=\int_{0}^{t}\kappa_{\e}(t-s)\bigg(\frac{1}{2}\sigma^2(\psi_1(s)+\psi_2(s))
		+	\tilde{h}(\psi_1(s),\psi_2(s))\bigg)\delta(s)ds.
\] 
This equation admits  $\delta\equiv 0$ as its unique solution by {the first part of} Theorem C.1 in \cite{AE19}, whence we conclude that $\psi_1=\psi_2$. The proof is now complete.
\end{proof}

\section{Proof of Lemma \ref{lem:Ale}}
\label{section:Conv}
\renewcommand{\theequation}{B-\arabic{equation}}
\setcounter{equation}{0}
We recall that $f$ is a locally bounded non-positive function defined on $\mathbb{R}_+$, which we also write as $f\in L_{\text{loc}}^\infty (\mathbb{R}_+;\mathbb{R}_-)$. Moreover, for every $\e>0$, $\psi_\e$ is the unique solution of the deterministic VIE \eqref{eq:G}-\eqref{eq:Vint}, whose well-posedness is established in Lemma \ref{lem:E+U}.
\subsection{Relative compactness in $L^1$}
Fix $T>0$ and a sequence $(\e_n)_n\subset (0,\infty)$ which converges to 0. 
\begin{lem}\label{lem_RFK}
 $(\psi_{\e_n})_n$ admits a convergent subsequence in $L^1(0,T)$. 
\end{lem}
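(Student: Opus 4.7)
The plan is to apply the Kolmogorov-Riesz-Fr\'{e}chet compactness criterion in $L^1(0,T)$, which requires (i) a uniform $L^1$-bound on $\{\psi_{\e_n}\}$ and (ii) uniform equicontinuity of translations, $\lim_{h\to 0^+}\sup_n\|\tau_h\psi_{\e_n}-\psi_{\e_n}\|_{L^1(0,T-h)}=0$. Point (i) is immediate from \eqref{bound} combined with $\|\kappa_\e\|_{L^1(0,T)}\le 1/\lm$ (which holds since $\lm\kappa_\e$ is a probability density on $\mathbb{R}_+$); this yields the uniform bound $\|\psi_\e\|_{L^\infty(0,T)}\le\|f\|_{L^\infty(0,T)}/\lm=:M$.

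For (ii), set $\delta^h_\e(t):=\psi_\e(t+h)-\psi_\e(t)$ and $G_\e(s):=G(s,\psi_\e(s))$, which is then uniformly bounded in $L^\infty(0,T)$. A change of variables in \eqref{RV_eq} gives
\begin{equation*}
\delta^h_\e(t)=\int_0^h \kappa_\e(t+h-s)G_\e(s)\,ds+\int_0^t\kappa_\e(t-u)[G_\e(u+h)-G_\e(u)]\,du.
\end{equation*}
A Fubini computation shows the first (boundary) term has $L^1(0,T-h)$-norm at most $\|G_\e\|_\infty h/\lm=O(h)$ uniformly in $\e$. Decomposing $G_\e(u+h)-G_\e(u)=[f(u+h)-f(u)]+[\Phi(\psi_\e(u+h))-\Phi(\psi_\e(u))]$ with $\Phi(w):=\tfrac12\sigma^2 w^2+V_1(w)$ uniformly Lipschitz on $[-M,0]$ with constant $K:=M(\sigma^2+\int_{\mathbb{R}_+} x^2\nu(dx))$, I obtain the pointwise Volterra inequality
\begin{equation*}
|\delta^h_\e(t)|\le\chi^h_\e(t)+K\int_0^t\kappa_\e(t-u)|\delta^h_\e(u)|\,du,
\end{equation*}
where $\chi^h_\e$ collects the boundary contribution together with $\kappa_\e*|\tau_h f-f|$, both having $L^1(0,T-h)$-norm tending to $0$ as $h\to 0$ uniformly in $\e$ (using local boundedness of $f$, hence $L^1$-continuity of its translations, together with Young's inequality and $\|\kappa_\e\|_{L^1}\le 1/\lm$).

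To close the argument I would invoke the linear comparison principle of Theorem C.1 in \cite{AE19} (already used in Lemma \ref{lem2.4}) to dominate $|\delta^h_\e|$ by $\chi^h_\e+R_\e*\chi^h_\e$, where $R_\e$ is the resolvent of $K\kappa_\e$; the Laplace identity $\hat R_\e(z)=K/(\e z^\al+\lm-K)$ shows that $R_\e$ is a shifted Mittag-Leffler kernel. In the regime $\lm>K$ one has $\|R_\e\|_{L^1(0,T)}\le K/(\lm-K)$ uniformly in $\e$, so $\|\delta^h_\e\|_{L^1(0,T-h)}\to 0$ and Kolmogorov-Riesz-Fr\'{e}chet delivers the claim. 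The main obstacle I anticipate is the complementary regime $\lm\le K$, where $R_\e$ has exponentially growing asymptotics and is not uniformly $L^1$-integrable in $\e$; circumventing this likely requires exploiting the nonnegativity of $\Phi$ on $\mathbb{R}_-$ (already used in \eqref{bound}) for a sharper splitting of the nonlinearity, or a density argument approximating $f$ by smooth data combined with a stability estimate propagated through the VIE.
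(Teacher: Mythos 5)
Your high-level strategy matches the paper's: establish the uniform $L^\infty$-bound \eqref{unif_bound}, then verify the translation-equicontinuity hypothesis of the Kolmogorov--Riesz--Fr\'echet theorem. The uniform bound and the decomposition of $\tau_h\psi_\e-\psi_\e$ into a boundary term plus a convolution-of-increments term are both essentially what the paper does, and your estimate of the boundary term and of $\kappa_\e*|\tau_h f - f|$ via Young's inequality and $L^1$-continuity of translations are fine.

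The gap is in the comparison step, and you correctly diagnose it yourself. By replacing the exact linear coefficient with the Lipschitz constant $K=M(\sigma^2+\int x^2\nu(dx))$, you produce the bound
\begin{equation*}
|\delta^h_\e(t)|\le\chi^h_\e(t)+K\int_0^t\kappa_\e(t-u)|\delta^h_\e(u)|\,du,
\end{equation*}
whose resolvent $R_\e$ of $K\kappa_\e$ is only uniformly $L^1$-bounded in $\e$ when $\lm>K$. This restriction is not assumed anywhere in the paper, so the argument as written does not close. Your remark that the remedy ``likely requires exploiting the nonnegativity of $\Phi$ on $\mathbb{R}_-$'' is exactly the right instinct, but it is not a minor refinement --- it is \emph{the} idea that makes the lemma go through for all $\lm>0$. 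Concretely, the paper keeps the exact coefficient rather than its absolute value: because $\bar{G}(w_1)-\bar{G}(w_2)=\big(\tfrac12\sigma^2(w_1+w_2)+\tilde h(w_1,w_2)\big)(w_1-w_2)$ and both $\tfrac12\sigma^2(w_1+w_2)$ and $\tilde h(w_1,w_2)$ are non-positive for $w_1,w_2\le 0$, the increment $\tau_h\bar\psi_n-\bar\psi_n$ solves a \emph{linear} VIE with the non-positive coefficient $\phi_n(\cdot;h)$ of \eqref{phi_n}. One then applies the comparison result for linear VIEs with non-positive coefficient (Theorem C.3 in \cite{AE19}, rather than a resolvent/Gronwall iteration) to obtain \eqref{inter_1} directly; no smallness or largeness condition on $\lm$ relative to $K$ is needed, because the damping comes from the sign of $\phi_n$, not from a contraction estimate. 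So: same framework, same decomposition, but your Lipschitz-absolute-value step destroys the one piece of structure that makes the uniform estimate hold in the claimed generality.
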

\begin{proof}
\bl{$(\psi_{\e_n})_n$ is bounded in $L^1(0,T)$ because (\bl{from} \eqref{bound}) we know that}
\begin{equation}\label{unif_bound}
 \|\psi_{\e}\|_{L^\infty(0,T)} \le \|\kappa_{\e}\ast f \|_{L^{\infty}(0,T)}\le \|\kappa_{\e}\|_{L^1(0,T)} \|f\|_{L^{\infty}(0,T)}\le \frac{1}{\lm} \|f\|_{L^{\infty}(0,T)}.
\end{equation}
\bl{Now} define the extended maps $\bar{\psi}_n\colon\mathbb{R}\to\mathbb{R}_-$ by 
\[
\bar{\psi}_n(t)=\begin{cases}
	\psi_{\e_n}(t),&t\in [0,T],\\
	0,&\text{otherwise}.
\end{cases}
\]
To prove the lemma, by the Kolmogorov-Riesz-Fr\'{e}chet theorem (see e.g. Theorem 4.26 in \cite{Brez11}), it \bl{suffices to show} that 
\begin{equation}\label{translation}
	\lim_{h\to 0 }	\|\tau_h\bar{\psi}_n-\bar{\psi}_n\|_{L^1(\mathbb{R})}=0\quad \text{uniformly in }n,
\end{equation}
where $\tau_h$ \bl{is} the translation operator defined by $\tau_h g(x)=g(x+h)$ \bl{for an arbitrary function} $g\colon \mathbb{R}\to \mathbb{R}$. \vspace{2mm}\\
Consider the case $h>0$; when $h<T$,  by \eqref{RV_eq}, recalling \eqref{def_h} and \eqref{def_htilde},
\begin{align*}
	\notag	\tau_h\bar{\psi}_n(t)-\bar{\psi}_n(t)&=(\kappa_{\e_n}\ast f) (t+h)-(\kappa_{\e_n}\ast f )(t)
	\\\notag
	&\quad 
	+
	\int_{t}^{t+h}\kappa_{\e_n}(s)\Big(\frac{1}{2}\sigma^2 \bar{\psi}_n(t+h-s) +h(\bar{\psi}_n(t+h-s))\Big) \bar{\psi}_n(t+h-s)ds
	\\\notag
	&\quad +
	\int_{0}^{t}\kappa_{\e_n}(s)\phi_n(t-s;h) (	\tau_h\bar{\psi}_n(t-s)-\bar{\psi}_n(t-s))ds\\
	&:= \mathbf{I}_{n,h}(t) + \mathbf{II}_{n,h}(t)
	+
	(\kappa_{\e_n}\ast (\phi_n(\cdot;h)(\tau_h\bar{\psi}_n-\bar{\psi}_n))) (t),\quad t\in [0,T-h],
\end{align*}
where \bl{$\mathbf{I}_{n,h}(t)$ and $\mathbf{II}_{n,h}(t)$ refer to the first and second lines respectively on the right hand side here, and}
\begin{align}
\phi_n(t;h)=\frac{1}{2}\sigma^2(\tau_h\bar{\psi}_n(t)+\bar{\psi}_n(t))
+	\tilde{h}(\tau_h\bar{\psi}_n(t),\bar{\psi}_n(t)),\quad t\in \mathbb{R}.
\label{phi_n}
\end{align}
Hence $\chi=\tau_h\bar{\psi}_n-\bar{\psi}_n -\mathbf{\upperRomannumeral{1}}_{n,h} - \mathbf{\upperRomannumeral{2}}_{n,h}$ solves the linear VIE
\[
\chi
=
\kappa_{\e_n}\ast (\phi_n(\cdot;h)(\tau_h\bar{\psi}_n-\bar{\psi}_n)) =\kappa_{\e_n}\ast (\phi_n(\cdot;h)\chi+\phi_n(\cdot;h)(\mathbf{\upperRomannumeral{1}}_{n,h} + \mathbf{\upperRomannumeral{2}}_{n,h}))
\]
on the interval $[0,T-h]$. 
 $\mathbf{\upperRomannumeral{1}}_{n,h},\, \mathbf{\upperRomannumeral{2}}_{n,h}$ and $\phi_n(\cdot;h)$ are continuous on $[0,T-h]$, so (\bl{given that $\phi_n(\cdot;h)$ is non-positive}), Theorem C.3 in \cite{AE19} implies that 
\begin{equation}\label{inter_1}
	|(\tau_h\bar{\psi}_n-\bar{\psi}_n)(t)|\le 
	|\mathbf{\upperRomannumeral{1}}_{n,h}(t) |+ |\mathbf{\upperRomannumeral{2}}_{n,h}(t)| + 
	(\kappa_{\e_n}\ast|\phi_n(\cdot;h)(\mathbf{\upperRomannumeral{1}}_{n,h} + \mathbf{\upperRomannumeral{2}}_{n,h})|)(t)
	,\quad t\in [0,T-h].
\end{equation}
We compute
\begin{align}\notag\label{inter_11}
	&	\int_{0}^{T-h}\bigg(\int_{t}^{t+h}\kappa_{\e_n}(s)|f(t+h-s)|ds\bigg) dt=
	\int_{0}^{T-h}\bigg(\int_{0}^{T}1_{\{s<t+h\}}1_{\{s>t\}}\kappa_{\e_n}(s)|f(t+h-s)|ds\bigg) dt\\
	&\notag\qquad =\int_{0}^{T} \kappa_{\e_n}(s)\bigg(
	\int_{0}^{T-h}1_{\{t<s\}}1_{\{t>s-h\}}|f(t+h-s)|dt\bigg)ds \quad \bl{\text{(by Tonelli)}}
	\\
	&\qquad \le  \int_{0}^{h} \kappa_{\e_n}(s)\bigg(\int_{0}^{s}|f(t+h-s)|dt\bigg)ds 	
	+ \int_{h}^{T} \kappa_{\e_n}(s)\bigg(\int_{s-h}^{s}|f(t+h-s)|dt\bigg)ds \nn
	\\&\qquad \le  \frac{2}{\lambda} h \|f\|_{L^\infty(0,T)},
\end{align}
\bl{where $h$ appears in the final term since both inner integrals have range $\le h$}.
Thus, 
denoting by $\bar{f}=f 1_{[0,T]}\in L^\infty(\mathbb{R})$ and using Theorem 2.2, Chapter 2 in \cite{GLS90},
\begin{align*}
	&	\int_{0}^{T-h}|\mathbf{\upperRomannumeral{1}}_{n,h}(t)| dt \\&\qquad \le \int_{0}^{T-h} \int_{0}^{t}\bigg(\kappa_{\e_n}(s) |(\tau_hf-f)(t-s)|ds\bigg)dt
	+
	\int_{0}^{T-h}\bigg(\int_{t}^{t+h}\kappa_{\e_n}(s)|f(t+h-s)|ds\bigg) dt\\
	&\qquad 
	\le \bigg(\int_{0}^{T-h}\kappa_{\e_n}(s) ds \bigg) \|\tau_h\bar{f}-\bar{f}\|_{L^1(\mathbb{R})}
	+ \frac{2}{\lambda} h  \|f\|_{L^\infty(0,T)}
	\le \frac{1}{\lambda} \|\tau_h\bar{f}-\bar{f}\|_{L^1(\mathbb{R})} + \frac{2}{\lambda}h\|\bar{f}\|_{L^\infty(\mathbb{R})}.
\end{align*}
Since these estimates do not depend on $n\in \mathbb{N}$,  the continuity of the translation in $L^1(\mathbb{R})$ (see, for instance, Lemma 4.3 in \cite{Brez11}) yields that 
\begin{equation}\label{1step}
	\lim_{h\to 0+ }	\int_{0}^{T-h}|\mathbf{\upperRomannumeral{1}}_{n,h}(t)| dt =0	\quad \text{uniformly in }n.
\end{equation}
Given that $(\bar{\psi}_n)_n$ is bounded in $L^\infty(\mathbb{R})$ by \eqref{unif_bound} and $h(\cdot)$ (defined in \eqref{def_h}) is continuous on $\mathbb{R}$, the \bl{same} computations  as in \eqref{inter_11} (\bl{but} with $1$ in \bl{place} of $f$) show that 
\begin{equation}\label{inter_2}
	\lim_{h\to 0+ }	\int_{0}^{T-h}|\mathbf{\upperRomannumeral{2}}_{n,h}(t)| dt =0	\quad \text{uniformly in }n.
\end{equation}
Then (again by \eqref{unif_bound} and the continuity of $\tilde{h}$), there exists a constant $C>0$ such that 
\[
|\phi_n(t;h)|\le C,\quad t,\,h\in \mathbb{R},\,n\in\mathbb{N},
\]
where $\phi_n(\cdot;h)$ is defined in \eqref{phi_n}.
\bl{Therefore}
\begin{align*}
	\int_{0}^{T-h}	|(\kappa_{\e_n}\ast|\phi_n(\cdot;h)(\mathbf{\upperRomannumeral{1}}_{n,h} + \mathbf{\upperRomannumeral{2}}_{n,h})|)(t)|dt \le C \frac{1}{\lambda}\int_0^{T-h}(|\mathbf{\upperRomannumeral{1}}_{n,h}(t)| +|\mathbf{\upperRomannumeral{2}}_{n,h}(t)|) dt,
\end{align*}
whence (by \eqref{1step} and \eqref{inter_2}), 
\begin{equation}\label{inter_3}
	\lim_{h\to 0+} 	\int_{0}^{T-h}	|(\kappa_{\e_n}\ast|\phi_n(\cdot;h)(\mathbf{\upperRomannumeral{1}}_{n,h} + \mathbf{\upperRomannumeral{2}}_{n,h})|)(t)|dt =0\quad \text{uniformly in }n.
\end{equation}
Combining \eqref{1step}, \eqref{inter_2} and \eqref{inter_3} in \eqref{inter_1} we deduce that 
\begin{equation}\label{part1}
	\lim_{h\to 0+}\|\tau_h\bar{\psi}_{n}-\bar{\psi}_n\|_{L^1(0,T-h)}=0 \quad \text{uniformly in }n.
\end{equation}
On the interval $[-h,0]$ the maps  $\bar{\psi}_n$ equal 0, hence, by \eqref{unif_bound},
\begin{align*}
	\|\tau_h\bar{\psi}_{n}-\bar{\psi}_n\|_{L^1(-h,0)}=\int_{-h}^{0} |\tau_h\bar{\psi}_n(t)|dt \le
	\Big(\sup_n\|\bar{\psi}_n\|_{L^\infty(\mathbb{R})}\Big)h  \underset{h\to 0+}{\longrightarrow} 0 \quad \text{uniformly in }n.
\end{align*}
In a similar way, on the interval $[T-h,T]$ 
\[
\|\tau_h\bar{\psi}_{n}-\bar{\psi}_n\|_{L^1(T-h,T)}=\int_{T-h}^{T} |\bar{\psi}_n(t)|dt\le 	\Big(\sup_n\|\bar{\psi}_n\|_{L^\infty(\mathbb{R})}\Big)h  \underset{h\to 0+}{\longrightarrow} 0 \quad \text{uniformly in }n.
\]
The three previous equations yield
\begin{equation*}
	\lim_{h\to 0+ }	\|\tau_h\bar{\psi}_n-\bar{\psi}_n\|_{L^1(\mathbb{R})}=0\quad \text{uniformly in }n.
\end{equation*}
When $h<0$, assuming without loss of generality that $|h|<T$ we can simply write
\begin{align*}
	\|\tau_h\bar{\psi}_n-\bar{\psi}_n\|_{L^1(\mathbb{R})}&= \|\bar{\psi}_n\|_{L^1(0,|h|)}+ \|\tau_h\bar{\psi}_n\|_{L^1(T,T+|h|)} + 
	\|\bar{\psi}_n-\tau_h\bar{\psi}_n\|_{L^1(|h|,T)}\\
	&
	\le 2\Big(\sup_n\|\bar{\psi}_n\|_{L^\infty(\mathbb{R})}\Big)|h| +\|\tau_{|h|}\bar{\psi}_n-\bar{\psi}_n\|_{L^1(0,T-|h|)}  
	\underset{h\to 0-}{\longrightarrow} 0 \quad \text{uniformly in }n,
\end{align*}
where we use \eqref{part1} for the last limit.
Therefore \eqref{translation} is verified and the proof is complete.
\end{proof}
\subsection{Characterization of the limit points \bl{of $\psi_{\e}$}}\label{sec_conv}
\begin{lem}\label{lemma_conv_cont}
For every $T>0$ and $g\in L^1(0,T)$,
\[
	\lim_{\e\to 0}\int_{0}^{T} \Big|(\kappa_{\e}\ast g )(t)-\frac{1}{\lambda}g(t)\Big|dt=0,
\]
\bl{i.e.} $\kappa_{\e}\ast g$ converges to $\frac{1}{\lambda}g$ in $L^1(0,T)$ as $\e\to 0$.
\end{lem}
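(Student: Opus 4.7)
The approach is to recognise that $\lambda\kappa_\e$ is an approximate identity (a Dirac sequence) on $[0,\infty)$, so the lemma reduces to the classical statement that convolution with an approximate identity converges in $L^1$. Specifically, since $f^{\alpha,\lambda}(t)=\lambda t^{\alpha-1}E_{\alpha,\alpha}(-\lambda t^{\alpha})$ is a probability density (as recalled just above Eq.~\eqref{asymptotic}), the substitution $\lambda\mapsto \lambda/\e$ yields $\lambda\kappa_\e(t)=f^{\alpha,\lambda/\e}(t)$, hence $\mu_\e:=\lambda\kappa_\e\ge 0$ with $\int_0^{\infty}\mu_\e(s)\,ds=1$. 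It therefore suffices to prove that $\mu_\e * g\to g$ in $L^1(0,T)$ for every $g\in L^1(0,T)$, and then divide by $\lambda$.

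Extend $g$ by zero to all of $\mathbb{R}$, denoting the extension $\bar{g}\in L^1(\mathbb{R})$. For $t\in(0,T)$ write
\[
(\mu_\e * g)(t)-g(t)=\int_0^{\infty}\mu_\e(u)\bigl(\bar{g}(t-u)-\bar{g}(t)\bigr)du,
\]
and apply Tonelli's theorem to get
\[
\|\mu_\e * g-g\|_{L^1(0,T)}\le \int_0^{\infty}\mu_\e(u)\,\|\tau_{-u}\bar{g}-\bar{g}\|_{L^1(\mathbb{R})}\,du,
\]
where $\tau_h$ is the translation operator already used in Lemma~\ref{lem_RFK}.

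Now fix $\eta>0$. By continuity of translation in $L^1(\mathbb{R})$ (e.g.\ Lemma~4.3 in \cite{Brez11}), there is $\delta>0$ such that $\|\tau_{-u}\bar{g}-\bar{g}\|_{L^1(\mathbb{R})}<\eta$ for every $u\in[0,\delta]$. Split the integral at $\delta$ and use the trivial bound $\|\tau_{-u}\bar{g}-\bar{g}\|_{L^1(\mathbb{R})}\le 2\|\bar{g}\|_{L^1(\mathbb{R})}$ on $(\delta,\infty)$ to obtain
\[
\|\mu_\e * g-g\|_{L^1(0,T)}\le \eta\int_0^{\delta}\mu_\e(u)\,du\,+\,2\|\bar{g}\|_{L^1(\mathbb{R})}\int_{\delta}^{\infty}\mu_\e(u)\,du\le \eta\,+\,2\|\bar{g}\|_{L^1(\mathbb{R})}\int_{\delta}^{\infty}\lambda\kappa_\e(u)\,du.
\]
The last tail vanishes as $\e\to 0$ by the Dirac-type asymptotic~\eqref{asymptotic}, so $\limsup_{\e\to 0}\|\mu_\e*g-g\|_{L^1(0,T)}\le \eta$, and since $\eta$ is arbitrary the limit is zero; dividing by $\lambda$ gives the claim.

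This is really a routine approximate-identity argument, so there is no substantial obstacle once the preceding remarks on $f^{\alpha,\lambda/\e}$ and \eqref{asymptotic} are used; the only mildly delicate point is the harmless extension of $g$ by zero so that the continuity of translation in $L^1(\mathbb{R})$ applies uniformly in $t\in(0,T)$.
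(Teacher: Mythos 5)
Your proof is correct and follows essentially the same approximate-identity argument as the paper: split near/far from the diagonal, use continuity of translation in $L^1(\mathbb{R})$ for the near part, and the Dirac-type tail estimate \eqref{asymptotic} for the far part. The only (harmless) difference is that by extending $g$ by zero and exploiting $\int_0^\infty\lambda\kappa_\e=1$ you fold the paper's extra term $|g(t)|\bigl(\tfrac{1}{\lambda}-\int_0^t\kappa_\e(s)\,ds\bigr)$ into the translation integral, which slightly streamlines the bookkeeping.
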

\begin{proof}
	\bl{Let} $c>0$. By the  continuity of the translation in $L^1(\mathbb{R})$ (see e.g. Lemma 4.3 in \cite{Brez11}),  there exists an $\eta=\eta(c)\in(0,T)$ such that, defining $\bar{g}=g1_{[0,T]}\in L^1(\mathbb{R})$, \bl{$\int_{0}^{T}|\bar{g}(t-s)-\bar{g}(t)|dt <c$}, and hence 
	\[
		\int_{s}^{T}|g(t-s)-g(t)|dt <c,
	\]
\bl{\noindent for $s\in (0,\eta)$}.  Then from Tonelli's theorem and some straightforward manipulations,
	\begin{align*}
		&\int_{0}^{T} \Big|(\kappa_{\e}\ast g )(t)-\frac{1}{\lambda}g(t)\Big|dt \le 
		\int_{0}^{T} \bigg(\int_{0}^{t}\kappa_{\e}(s) |g(t-s)-g(t)|ds+|g(t)|\bigg(\frac{1}{\lambda}-\int_{0}^{t}\kappa_{\e}(s)ds\bigg)\bigg)dt 
		\\
		&\qquad=\bigg\{\int_{0}^{\eta}+\int_{\eta}^{T}\bigg\} \kappa_{\e}(s)\bigg(\int_{s}^{T}|g(t-s)-g(t)|dt\bigg)ds+ \int_{0}^{T}|g(t)|	\bigg(\frac{1}{\lambda}-\int_{0}^{t}\kappa_{\e}(s)ds\bigg)dt
		\\
		&\qquad
		\le 
		\frac{1}{\lambda}c+2\|g\|_{L^1(0,T)}\int_{\eta}^{T} \kappa_{\e}(s)ds + \int_{0}^{T}|g(t)|	\bigg(\frac{1}{\lambda}-\int_{0}^{t}\kappa_{\e}(s)ds\bigg)dt,
	\end{align*}
 and hence (by the dominated convergence theorem and \eqref{asymptotic}),
\[
	\limsup_{\e\to0}
	\int_{0}^{T} \Big|(\kappa_{\e}\ast g )(t)-\frac{1}{\lambda}g(t)\Big|dt
	\le \frac{1}{\lambda}c.
\]
Since $c$ can be chosen arbitrarily small the proof is complete.
\end{proof}\vspace{2mm}

\bl{Now recall the $\e=0$ solution $\psi_0$ defined in \eqref{lim_eq}, see Lemma \ref{lem_aggiunto}.  Then we have the following:}
\begin{lem}\label{lem_deltabehaviour}
Consider $T>0$ and  a sequence $(\epsilon_n)_n\subset (0,\infty)$ which converges to $0$. Suppose that there exists a non-positive function $\bar{\psi}\in L^1(0,T)\cap L^\infty (0,T)$ such that $\psi_{\e_n}\to \bar{\psi} $ in $L^1(0,T)$. Then $\bar{\psi}=\psi_0$ a.e. in $(0,T)$.
\end{lem}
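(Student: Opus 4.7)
The plan is to pass to the limit as $n\to\infty$ in the VIE \eqref{RV_eq}, written compactly as $\psi_{\e_n} = \kappa_{\e_n}\ast G(\cdot,\psi_{\e_n})$ with $G$ as in \eqref{eq:G}, and identify the resulting equation for $\bar\psi$ with \eqref{lim_eq}. Since the latter has a unique non-positive solution (by the monotonicity argument recorded just after its derivation in the main proof), this will force $\bar\psi=\psi_0$ a.e.\ on $(0,T)$.

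The key step is to show that the right-hand side of the VIE converges to $\tfrac{1}{\lambda}G(\cdot,\bar\psi)$ in $L^1(0,T)$. For this, I would split
\[
\kappa_{\e_n}\ast G(\cdot,\psi_{\e_n}) - \tfrac{1}{\lambda}G(\cdot,\bar\psi) = \kappa_{\e_n}\ast\bigl(G(\cdot,\psi_{\e_n})-G(\cdot,\bar\psi)\bigr) + \Bigl(\kappa_{\e_n}\ast G(\cdot,\bar\psi) - \tfrac{1}{\lambda}G(\cdot,\bar\psi)\Bigr).
\]
The second bracket tends to $0$ in $L^1(0,T)$ by Lemma \ref{lemma_conv_cont}, applied with $g=G(\cdot,\bar\psi)$; the hypothesis $\bar\psi\in L^\infty(0,T)$ together with $f\in L^\infty_{\text{loc}}(\mathbb{R}_+)$ ensures $G(\cdot,\bar\psi)\in L^\infty(0,T)\subset L^1(0,T)$. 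For the first bracket, Young's convolution inequality with the uniform bound $\|\kappa_{\e_n}\|_{L^1(0,T)}\le 1/\lambda$ (already exploited in \eqref{unif_bound}) together with the Lipschitz-type estimate from \eqref{eq:Sug} -- whose validity rests on the uniform $L^\infty$ bound \eqref{unif_bound} on the $\psi_{\e_n}$, the assumed $L^\infty$ bound on $\bar\psi$, and the continuity of $\tilde{h}$ on $\mathbb{R}^2_-$ -- gives a bound of the form $C\,\|\psi_{\e_n}-\bar\psi\|_{L^1(0,T)}$, which vanishes by hypothesis.

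Combining the two estimates, the right-hand side of the VIE converges to $\tfrac{1}{\lambda}G(\cdot,\bar\psi)$ in $L^1(0,T)$, while the left-hand side converges to $\bar\psi$ by assumption. Equality of the limits forces $\lambda\bar\psi(t) = G(t,\bar\psi(t))$ for a.e.\ $t\in(0,T)$, i.e.\ exactly \eqref{lim_eq}, and uniqueness of the non-positive solution gives $\bar\psi=\psi_0$ a.e. I expect the only mildly delicate point to be securing the uniform $L^\infty$ control that makes the Lipschitz estimate work across all $n$ and in the limit; this is available through \eqref{unif_bound} for the sequence and through the hypothesis $\bar\psi\in L^\infty(0,T)$ for the limit, so no additional compactness argument is needed beyond the one already performed in Lemma \ref{lem_RFK}.
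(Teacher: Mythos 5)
Your proof is correct and takes essentially the same approach as the paper: both pass to the limit in the VIE $\psi_{\e_n}=\kappa_{\e_n}\ast G(\cdot,\psi_{\e_n})$ using Lemma \ref{lemma_conv_cont} for the Dirac-limit of $\kappa_{\e_n}$ and the Lipschitz-type estimate (via $\tilde{h}$ and the uniform bound \eqref{unif_bound}) for the difference term. The only cosmetic difference is at the end: you identify the limit as a non-positive a.e.\ solution of \eqref{lim_eq} and invoke the pointwise uniqueness directly, whereas the paper subtracts \eqref{lim_eq} from the scaled VIE before passing to the limit and then derives the contradiction $\lambda<0$ explicitly -- both rest on the same strict monotonicity of $w\mapsto G(t,w)-\lambda w$ on $\mathbb{R}_-$.
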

\begin{proof} 
\bl{Recall our original VIE in \eqref{RV_eq}:
$\psi_{\e}=\kappa_{\e}\ast f + \kappa_\e \ast \bar{G}(\psi_{\e})$}.  Multiplying by $\lambda$, taking the difference with \eqref{lim_eq}, and adding and subtracting $\lambda( \kappa_{\e_n}\ast \bar{G}(\bar{\psi}))(t)$, we see that
\begin{align}\label{intermediate_conv1}
&\notag\lambda (\psi_{\e_n}(t)-\psi_{0}(t))\\&\quad = 	\lambda(\kappa_{\e_n}\ast f) (t) -f(t) \,+\, \lambda (\kappa_{\e_n} \ast (\bar{G}(\psi_{\e_n})-\bar{G}(\bar{\psi})))(t)
	\,+\,
	\lambda 	(\kappa_{\e_n} \ast \bar{G}(\bar{\psi}))(t)-\bar{G}(\psi_{0}(t))
\end{align}
for every $t\in [0,T]$.
By Lemma \ref{lemma_conv_cont}, considering that $\bar{G}(\bar{\psi}(\cdot))$ belongs to  $L^1(0,T)$ because $\bar{\psi}\in L^1(0,T)\cap L^\infty (0,T)=\bl{L^\infty (0,T)}$,
\[
\lim_{n\to\infty}\|\lambda(\kappa_{\e_n}\ast f)  - f\|_{L^1(0,T)}=0	\qquad \text{and}\qquad \lim_{n\to\infty}  \|\lambda(\kappa_{\e_n} \ast \bar{G}(\bar{\psi}))-\bar{G}(\bar{\psi}(\cdot))\|_{L^1(0,T)}=0.
\]
\bl{and, using the map $\tilde{h}$ defined in \eqref{def_htilde} in the proof of Lemma \ref{lemA3},}
\begin{align*}
&	\lambda\|\kappa_{\e_n}\ast (\bar{G}(\psi_{\e_n})-\bar{G}(\bar{\psi}))
	\|_{L^1(0,T)}  \lee 	\bl{\lambda\|\kappa_{\e_n}\|_{L^1(0,T)} \|\bar{G}(\psi_{\e_n})-\bar{G}(\bar{\psi})\|_{L^1(0,T)}}
	\\
	&\qquad 	\le 
	 \bl{\sup_{n\in\mathbb{N}}\bigg(	\frac{1}{2}\sigma^2  \|\psi_{\e_n}+\bar{\psi}\|_{L^\infty(0,T)}+
	\|\tilde{h}(\psi_{\e_n},\bar{\psi})\|_{L^\infty(0,T)}\bigg) \|\psi_{\e_n}-\bar{\psi}\|_{L^1(0,T)}} \\
	&\qquad 	\le 
	\bigg(	\frac{1}{2}\sigma^2  \Big(\frac{1}{\lambda}\|f\|_{L^\infty(0,T)}+\|\bar{\psi}\|_{L^\infty(0,T)}\Big)+ \sup_{n\in\mathbb{N}}
	\|\tilde{h}(\psi_{\e_n},\bar{\psi})\|_{L^\infty(0,T)}\bigg) \|\psi_{\e_n}-\bar{\psi}\|_{L^1(0,T)}\underset{n\to\infty}{\longrightarrow}0.
\end{align*}
Note that $\tilde{h}$ is continuous, and hence bounded in compact sets, and since  $\psi_{\e_n}$ and $\bar{\psi}$ are (uniformly) bounded, they take value in a compact set (ball), a.e., so the supremum in the final line is finite.
Thus,  from \eqref{intermediate_conv1} we deduce that 
\begin{align*}
	&\lambda(\bar{\psi}(t)-\psi_{0}(t))
	= \bl{\bar{G}(\bar{\psi}(t))-\bar{G}(\psi_{0}(t))}=
	\bigg(\frac{1}{2}\sigma^2(\bar{\psi}(t)+\psi_0(t))	+	\tilde{h}(\bar{\psi}(t),\psi_0(t))\bigg)(\bar{\psi}(t)-\psi_{0}(t))
	,\\&\qquad \text{for a.e. }t\in (0,T).	
\end{align*}
This implies that $\bar{\psi}=\psi_0$ a.e. in $(0,T)$. Indeed, if there exists a subset $N\subset (0,T)$ with positive \bl{Lebesgue} measure where $\bar{\psi}\neq \psi_0$, then dividing the previous equation by $\bar{\psi}-\psi_0$ gives
\[
\lambda= \frac{1}{2}\sigma^2(\bar{\psi}(t)+\psi_0(t))+	\tilde{h}(\bar{\psi}(t),\psi_0(t)) <0\quad \text{ a.e. in }N,
\]
which is \bl{a contradiction since} $\lambda>0$. The proof is now complete.
\end{proof}

\subsection{Conclusion}
From Lemma \ref{lem_RFK}, we know that every sequence $(\psi_{\e_n})_n$ of solutions to \eqref{RV_eq} (where  $(\e_n)_n\subset(0,\infty)$ converges to 0 as $n\to \infty$) admits a convergent subsequence $(\psi_{\e_{n_k}})_k$ in $L^1(0,T)$. Since $(\psi_\e)_{\e>0}$ is a bounded family of (continuous) non-positive functions in $L^\infty(0,T)$, see \eqref{unif_bound}, the limit point of this subsequence belongs to $L^1(0,T)\cap L^\infty(0,T)$ and is non-positive, as well. \\
By Lemma \ref{lem_deltabehaviour} in Subsection \ref{sec_conv}, there exists a unique possible non-positive $L^1(0,T)-$limit point for $(\psi_{\e_{n_k}})_k$ in  $L^1(0,T)\cap L^\infty(0,T)$: $\psi_{0}$, the unique non-positive solution of \eqref{lim_eq}. Therefore, by the subsequence convergence principle  we conclude that 
\[
\lim_{\e\to 0}\psi_\e=\psi_0	\quad \text{in }L^1(0,T).
\]

\section{Laplace transform of hitting time to an upper barrier for a spectrally negative L\'{e}vy process}
\label{section:LT}
\renewcommand{\theequation}{C-\arabic{equation}}
\setcounter{equation}{0}
Let $X$ be a spectrally negative {one-dimensional} L\'{e}vy process, i.e. $\nu_X(0,\infty)=0$, where $\nu_X$ is the L\'{e}vy measure associated with $X$, and assume $X_0=0$. Suppose that $\nu_X$ satisfies 
\begin{equation}\label{finite_ex}
	\int_{(-\infty,-1)}|x|\nu_X(dx)<\infty;
\end{equation} 
by general properties of L\'evy processes (see e.g. Theorem 25.3 in \cite{Sato99}), \eqref{finite_ex} ensures that $\mathbb{E}[|X_t|]<\infty$ for every $t>0$.\\
In the next proposition, we establish a formula for the Laplace transform of the first hitting time of 
$X$ to upper (non-negative) barriers.
\begin{prop}\label{Prop_LT}
	Consider a spectrally negative {one-dimensional} L\'{e}vy process $X$ with   L\'evy measure $\nu_X$ satisfying \eqref{finite_ex}, with $X$ not identically zero. Suppose that 
	\[
	\gamma:=	\mathbb{E}[X_1]\ge0.
	\]
	For every $b\ge0$, denote by  $\tau_b$ the first hitting time of $X$ to $b$, i.e. $\tau_b= \inf \{t\ge 0 : X_t>b\}$, and define the function $V\colon\mathbb{R}_+\to\mathbb{R}_+$ by 
	\[
	V(p):=\frac{1}{2}(\sigma_X)^2 p^2 + \gamma p + \int_{\mathbb{R}_-}(e^{px}-1-px)\nu_X(dx),\quad p\ge0,
	\]
	where $\sigma^2_X\ge0$ denotes the Gaussian component of $X$.  
	Then for all $q\ge 0$
	\begin{equation}\label{stat_thm_LT}
		\mathbb{E}[e^{-q\tau_b}] = e^{-bV^{-1}(q)},
	\end{equation}
	where $V^{-1}$ is the inverse of $V$.
\end{prop}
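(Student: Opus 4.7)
The approach is to exploit the exponential martingale $M^{(p)}_t=\exp(pX_t-V(p)t)$ together with optional sampling and the defining spectral-negativity property that $X$ cannot overshoot upward barriers. The function $V$ is the Laplace exponent of $X$: the integrability hypothesis $\int_{(-\infty,-1)}|x|\,\nu_X(dx)<\infty$ guarantees $\gamma=\mathbb{E}[X_1]$ is finite, and since $\nu_X$ is supported on $\mathbb{R}_-$ the moment generating function satisfies $\mathbb{E}[e^{pX_t}]=e^{tV(p)}$ for every $p\ge 0$ by the L\'{e}vy--Khintchine formula. Consequently $(M^{(p)}_t)_{t\ge 0}$ is a positive martingale for every $p\ge 0$.

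I first verify that $V\colon[0,\infty)\to[0,\infty)$ is a bijection so that $V^{-1}$ is well-defined on $[0,\infty)$. Observe $V(0)=0$, $V'(0)=\gamma\ge 0$, and $V$ is smooth and convex on $[0,\infty)$, since $V''(p)=\sigma_X^2+\int_{\mathbb{R}_-}x^2 e^{px}\,\nu_X(dx)\ge 0$. Differentiating once and using $x(e^{px}-1)\ge 0$ for $x\le 0$, $p\ge 0$, shows that $V'(p)\ge\gamma\ge 0$, with strict inequality for $p>0$ in any non-degenerate case; combined with $V(p)\to\infty$ as $p\to\infty$ this gives that $V$ is a strictly increasing continuous bijection of $[0,\infty)$.

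Next, fix $b\ge 0$ and $p\ge 0$, and apply Doob's optional sampling to $M^{(p)}$ at the bounded stopping time $\tau_b\wedge t$, yielding $\mathbb{E}[M^{(p)}_{\tau_b\wedge t}]=1$. The spectral-negativity input enters here: because $\nu_X$ charges only $\mathbb{R}_-$, the process has no positive jumps, so its first upcrossing of $b$ is continuous, i.e.\ $X_{\tau_b}=b$ on $\{\tau_b<\infty\}$, and in particular $X_{\tau_b\wedge t}\le b$ for every $\omega$ and every $t$. This yields the uniform bound $M^{(p)}_{\tau_b\wedge t}\le e^{pb}$, and for $V(p)>0$ the pointwise limit
\[
M^{(p)}_{\tau_b\wedge t}\longrightarrow e^{pb-V(p)\tau_b}\mathbf{1}_{\{\tau_b<\infty\}}\qquad\text{as }t\to\infty,
\]
where the $\{\tau_b=\infty\}$ contribution vanishes thanks to $X_t\le b$ and $V(p)>0$. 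Bounded convergence then yields $\mathbb{E}[e^{pb-V(p)\tau_b}\mathbf{1}_{\{\tau_b<\infty\}}]=1$, and substituting $q=V(p)$ (so $p=V^{-1}(q)$) gives $\mathbb{E}[e^{-q\tau_b}\mathbf{1}_{\{\tau_b<\infty\}}]=e^{-bV^{-1}(q)}$. For $q>0$ the indicator can be absorbed into $e^{-q\tau_b}$, and the case $q=0$ (where $V^{-1}(0)=0$) is obtained by letting $q\downarrow 0$ and applying monotone convergence, which simultaneously shows $\mathbb{P}(\tau_b<\infty)=1$ (a fact one can also derive directly from $\gamma\ge 0$). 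The main subtlety lies in the limit passage, where the spectral-negativity identity $X_{\tau_b}=b$ is essential: it eliminates an overshoot factor that would otherwise appear, and it is exactly this feature that makes the closed form \eqref{stat_thm_LT} so clean.
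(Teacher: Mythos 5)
Your proposal is correct and follows essentially the same route as the paper: establishing $V$ as the (bijective, convex, increasing) Laplace exponent, applying optional stopping to the exponential martingale $e^{pX_t-V(p)t}$ at $\tau_b\wedge t$, exploiting spectral negativity to get $X_{\tau_b}=b$ without overshoot, passing to the limit $t\to\infty$, substituting $q=V(p)$, and handling $q=0$ by letting $q\downarrow 0$. The only cosmetic difference is that the paper splits the limit passage into a monotone-convergence piece on $\{\tau_b\le t\}$ and a bounded-convergence piece on $\{\tau_b>t\}$, whereas you treat the whole integrand in one bounded-convergence step with the uniform bound $e^{pb}$; these are equivalent.
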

\begin{proof}
	By Theorem 25.17 in \cite{Sato99},
	{for all $p\ge0$} we have  
	\begin{equation}\label{eq_LTAp}
		\log \Ex[e^{pX_t}]= tV(p),\quad t\ge0.
	\end{equation}
	Thus, $V$ is the logarithmic moment generating function (or cgf) of $X_1$ on $\mathbb{R}_+$. It then follows from Lemma 2.2.5 in \cite{DZ98} that $V$ is convex. Moreover, $V$ is continuous and differentiable, with 
	\[
	V'(p)=\gamma + (\sigma_X)^2p + \int_{\mathbb{R}_-}x(e^{px}-1)\nu_X(dx),\quad p\ge0.
	\]
	Since $V'>0$ on $(0,\infty)$, $V$ is increasing on $\mathbb{R}_+$ and $\lim_{p\to\infty}V(p)=\infty$.

	From the stationary and independent increments property one can  verify that the process $M=(M_t)_{t\ge0}$ given by $M_t:=e^{p X_t-V(p)t}$ is an $\mc{F}_t^X-$martingale. Indeed, for $0\le s\le t$,
	\begin{equation}\label{precise}
		\mathbb{E}[M_t|\mathcal{F}^X_s]
		=
		\mathbb{E}\Big[e^{p(X_t-X_s)}|\mathcal{F}^X_s\Big]
		e^{pX_s-V(p)t}=
		\mathbb{E}\Big[e^{p X_{t-s}}\Big]e^{pX_s-V(p)t}
		=M_s
		,
	\end{equation}
	where we  use \eqref{eq_LTAp} for the third equality.\\
	Now choose $p>0$.  Then applying the Optional Stopping Theorem to the bounded stopping time $t\wedge \tau_b$ we have
	\bq
	1~ \mathbb{E}[M_{t \wedge \tau_b}(1_{\{\tau_b\le t\}}+1_{\{\tau_b> t\}})] 
	&=& \mathbb{E}[e^{p b-V(p)\tau_b}  \,1_{\{\tau_b \le t \}}]  \,+\, 
	\mathbb{E}[e^{p X_t-V(p)t} 1_{\{\tau_b>t \}} ]\nn \,.
	\eq 
	Here for  the second equality we use that $X_{\tau_b}=b$ when $\tau_b<\infty$ ($\mathbb{P}-$a.s.), because $X$ can only have negative jumps. 
	Using the monotone convergence theorem and that  $\lim_{t\to \infty} 1_{\{\tau_b\le t\}}= 1_{\{\tau_b<\infty\}}$ for the left term, and the bounded convergence theorem for the right term (with the bound $e^{p b}$, since $V>0$ on $(0,\infty)$), we can take the limit as $t \to \infty$ and take the limit inside the expectation to obtain
	\begin{equation*}
		1 = \mathbb{E}[e^{p b-V(p)\tau_b}  \,1_{\{\tau_b<\infty\}}]. 
	\end{equation*}
	$V$ is a bijection from $\mathbb{R}_+$  onto itself (since $\gm\ge 0$), so we can re-write this as
	\begin{equation}\label{Eq_infinite}
		\mathbb{E}[e^{-q\tau_b}  1_{\{\tau_b<\infty\}}]= e^{-b V^{-1}(q)},\quad q>0. 
	\end{equation}
	Letting $q\searrow 0$ and using the bounded convergence theorem again, we see that
	$$
	\mathbb{P}(\tau_b<\infty)=\mathbb{E}[1_{\{\tau_b<\infty\}}]= e^{-b V^{-1}(0+)}, 
	$$
	where $V^{-1}(0+)=\lim_{q\searrow 0}V^{-1}(q)$. Since $V^{-1}$ is continuous on $\mathbb{R}_+$, we deduce that $V^{-1}(0+)=V^{-1}(0)=0$. Consequently, $\tau_b<\infty $ $\mathbb{P}-$a.s. and  \eqref{Eq_infinite} becomes \eqref{stat_thm_LT}, completing the proof.
\end{proof}
\begin{rem}
	When $\gamma >0$, for every $b\ge 0 $ the finiteness of the stopping time $\tau_b$ can be directly inferred from the LLN in Theorem 36.5 of \cite{Sato99}.
\end{rem}

\section{Brief formal derivation of the main idea in \cite{AAR25}}
\label{section:AAR25}
\renewcommand{\theequation}{D-\arabic{equation}}
\setcounter{equation}{0}


Consider a family of hyper-rough Heston models (with zero mean-reversion for simplicity) for which the quadratic variation of the log stock price satisfies
\bq
\la \log S^n \ra_t ~ X^n_t &=&  V_0 t\,+\,
\Big(H_n+\half\Big)\sigma \int_0^t (t-s)^{H_n-\half} W_{X^n_s}ds\nn\,
\eq
for $H_n\in (-\half,1)$. We refer the reader to  \cite{JR20}, Section 7 in \cite{A21} and Section 5 in \cite{FGS21} for more on this model. From Lemma 2.4 in \cite{AAR25}, 
we formally expect that
\bq
\lim_{H_n\searrow -\half }\Big(H_n+\half\Big) \sigma \int_0^t (t-s)^{H_n-\half} W_{X^n_s}ds~ \sigma W_{X_t}\nn\,,
\eq
where $X$ is the weak limit of $X^n$, so we expect $X$ to satisfy
\bq
X_t&=& V_0 t\,+\, \sigma W_{X_t}\label{eq:HT}\,.
\eq
Now let 
\bq
Y_t&=&-t +\sigma W_t \label{eq:Y_t}
\eq and set $\tilde{X}_t=H_{-V_0 t}$, where $H_b=\inf \lb t:Y_t=b\rb$.  Then setting $t\mapsto \tilde{X}_t$ in \eqref{eq:Y_t}, we see that
\bq
-V_0 t&=&  -\tilde{X}_t\,+\,   \sigma W_{\tilde{X}_t} \label{eq:Kuw}\,
\eq
i.e. $\tilde{X}$ satisfies the same equation as $X_t$ in \eqref{eq:HT}.  Hence (using the notation/setup in Lemma 2.3 in \cite{AAR25}, i.e. $c=-V_0$, $b=\sigma$ and $a=-1$), we  deduce that $X$ is an Inverse Gaussian L\'{e}vy process with
parameters $(V_0,\frac{V_0^2}{\sigma^2})$.

\sk
\sk
To analyze this process with VIEs, using that $\frac{1}{\Gamma(\al)}=\al+O(\al^2)$ as $\al \to 0$ (i.e. as $H\to -\half)$, we see that
the usual rough Heston VIE (with $\rho=0$) takes the form 
\bq
\phi(t) &=& \frac{1}{\Gamma(\al)} \int_0^t (t-s)^{\al-1}\Big(-\half (u^2+iu)\,+\, \half\sigma^2   \phi(s)^2\Big)ds\nn \\
&=&(1+O(\al))\al \int_0^t (t-s)^{\al-1}\Big(-\half (u^2+iu)\,+\, \half \sigma^2   \phi(s)^2\Big)ds
\,\,\,\,\to\,\,\,\, -\half (u^2+iu) \,+\, \half \sigma^2 \phi(t)^2\nn\,
\eq
as $\al \to 0$ (again using Lemma 2.4 in \cite{AAR25}), which is just an algebraic equation for $\phi$.
If we ignore the linear term in $u$ for simplicity (i.e. ignore the drift of the log stock price), then the (relevant) solution to this equation is $\phi(t)=\frac{1}{\sigma^2}(1 - \sqrt{1 + \sigma^2 u^2})$, i.e. the smaller root as in the proof of Theorem \ref{cont_prop}.
\end{appendix}



\end{document}